\documentclass{amsart}
\pagestyle{headings}


\setlength{\parindent}{2ex}

\addtolength{\hoffset}{-0.5in}
\addtolength{\textwidth}{1cm}

\numberwithin{equation}{section}

  \usepackage{verbatim}
\usepackage{mathrsfs}
\usepackage{latexsym}
\usepackage{epsfig}
\usepackage{amsmath}
\usepackage{bbm}
\usepackage{graphics}
\usepackage{amssymb}
\usepackage{amsthm}
\usepackage[alphabetic]{amsrefs}
\usepackage{amsopn}
\usepackage{amscd}
\usepackage[all,knot]{xy}
\xyoption{all}
\usepackage{rotating}
\usepackage{hyperref}

\theoremstyle{plain}
\newtheorem{thm}{Theorem}[section]
\newtheorem{lem}[thm]{Lemma}
\newtheorem{prop}[thm]{Proposition}
\newtheorem{cor}[thm]{Corollary}

\newtheorem*{thm*}{Theorem}
\newtheorem*{lem*}{Lemma}
\newtheorem*{prop*}{Proposition}
\newtheorem*{cor*}{Corollary}

\theoremstyle{definition}
\newtheorem{defn}[thm]{Definition}
\newtheorem*{defn*}{Definition}

{}
\newtheorem{rem}[thm]{Remark}
\newtheorem*{rem*}{Remark}

\newtheorem{notation}[thm]{Notation}{}
{}
{}
\newtheorem*{ack}{Acknowledgements}{}

\theoremstyle{remark}
{}
{}
{}

\def\cie{\subseteq}

\def\iso{\cong}

\def\un{\cup}

\def\intersec{\cap}

\def\to{\longrightarrow}

\def\rimp{\Rightarrow}

\def\int{\mathbb{Z}}

\def\str{\mathcal{O}}

\def\cN{\mathcal{N}}

\def\s{\sigma}
\def\S{\Sigma}
\def\t{\tau}
\def\D{\mathsf{D}}
\def\K{\mathsf{K}}

\def\op{\mathrm{op}}

\DeclareMathOperator{\Spec}{Spec}
\DeclareMathOperator{\Spech}{Spec^h}
\DeclareMathOperator{\Sing}{Sing}

\DeclareMathOperator{\supp}{supp}
\DeclareMathOperator{\supph}{supph}
\DeclareMathOperator{\codim}{codim}

\DeclareMathOperator{\rk}{rk}
\DeclareMathOperator{\id}{id}

\DeclareMathOperator{\Hom}{Hom}
\DeclareMathOperator{\RHom}{\mathbf{R}Hom}

\DeclareMathOperator{\RsHom}{\mathbf{R}\mathcal{H}\mathit{om}}
\DeclareMathOperator{\Ext}{Ext}

\DeclareMathOperator{\modu}{mod}

\DeclareMathOperator{\Coh}{Coh}
\DeclareMathOperator{\QCoh}{QCoh}

\DeclareMathOperator{\MCM}{MCM}
\DeclareMathOperator{\sMCM}{\underline{\MCM}}
\DeclareMathOperator{\Inj}{Inj}

\DeclareMathOperator{\Th}{Th}

\title{Duality for bounded derived categories of complete intersections}
\author{Greg Stevenson}
\address{Universit\"at Bielefeld, Fakult\"at f\"ur Mathematik, BIREP Gruppe, Postfach 10\,01\,31, 33501 Bielefeld, Germany.}
\email{gstevens@math.uni-bielefeld.de}


\begin{document}



\begin{abstract}
\noindent We show that every thick subcategory of the singularity category of a complete intersection ring is self dual. We also prove the analogous statement for thick subcategories of the bounded derived category and give applications to the symmetry of vanishing of cohomology. These results are also proved for certain complete intersection schemes.
\end{abstract}

\maketitle

\tableofcontents

\section{Introduction}

In \cite{DTradius} Dao and Takahashi pose the following question: given a local complete intersection $R$ is it true that every resolving subcategory of the category of maximal Cohen-Macaulay $R$-modules, $\MCM(R)$, is closed under $R$-duals? We give an affirmative answer to this question using the classification of thick subcategories of $\sMCM(R)$, the stable category of maximal Cohen-Macaulay $R$-modules, which was obtained in \cite{Stevensonclass}.

It turns out that the natural way to view this question seems to be in terms of the duality $\D^b(\Coh X) \iso \D^b(\Coh X)^\op$ for a Gorenstein scheme $X$. In this case $\str_X$ is a dualising complex so the perfect complexes are stable under this duality. Hence Grothendieck duality descends to the singularity category $\D_\mathrm{Sg}(X)$. One can then ask which ``nice'' subcategories of $\D^b(\Coh X)$ and $\D_\mathrm{Sg}(X)$ are self-dual. We do not know the answer to this question in general but we show that for certain complete intersection schemes every nice thick subcategory is self-dual in both of these categories; this is proved in Theorem \ref{thm_duality} for the singularity category and in Corollary \ref{cor_duality} for the bounded derived category. This implies the orthogonality relation for nice thick subcategories of these categories is symmetric.

Along the way (Theorem \ref{thm_bij}) we give, for such schemes, a classification of thick subcategories of $\D^b(\Coh X)$ which are closed under tensoring with perfect complexes (what we mean by nice above), which extends the known classifications for the perfect complexes and the singularity category. 



\section{Duality and singular supports on complete intersections}\label{sec_duality1}

We begin by fixing some notation. For a scheme $X$ (all our schemes are noetherian, separated and assumed to have enough locally free sheaves) we denote, as is usual, the bounded derived category of coherent $\str_X$-modules by $\D^b(\Coh X)$ (or $\D^b(\modu R)$ when $X = \Spec R$) and the thick subcategory of perfect complexes by $\D^\mathrm{perf}(X)$. The \emph{singularity category} of $X$, denoted $\D_{\mathrm{Sg}}(X)$, is defined to be the (idempotent completion of the) Verdier quotient $\D^b(\Coh X)/\D^\mathrm{perf}(X)$.

For us a complete intersection ring is a ring $R$ such that there is a regular ring $Q$ and a surjection $Q\to R$ with kernel generated by a regular sequence.

Let $T$ be a separated regular noetherian scheme of finite Krull dimension and $\mathcal{E}$ a vector bundle on $T$ of rank $c$. For a section $t\in H^0(T,\mathcal{E})$ we denote by $Z(t)$ the \emph{zero scheme} of $t$. We recall $Z(t)$ can be defined globally by the exact sequence
\begin{displaymath}
\xymatrix{
\mathcal{E}^{\vee} \ar[r]^{t^{\vee}} & \str_T \ar[r] & \str_{Z(t)} \ar[r] & 0.
}
\end{displaymath}
It can also be defined locally by taking a cover $X= \un_iU_i$ trivializing $\mathcal{E}$ via $f_i\colon \mathcal{E}\vert_{U_i} \stackrel{\sim}{\to} \str_{U_i}^{\oplus c}$ and defining an ideal sheaf $\mathscr{I}(s)$ by
\begin{displaymath}
\mathscr{I}(t)\vert_{U_i} = (f_i(t)_1,\ldots, f_i(t)_c).
\end{displaymath}
We say the section $t$ is \emph{regular} if the ideal sheaf $\mathscr{I}(t)$ is locally generated by a regular sequence. Thus the zero scheme $Z(t)$ of a regular section $t$ is a locally complete intersection in $T$ of codimension $c$. In our situation $t$ is  regular if and only if $\codim Z(t) = \rk \mathcal{E} = c$ (see for example \cite{MatsuAlg}*{16.B}). 

Let $T$ and $\mathcal{E}$ be as above and let $t\in H^0(T,\mathcal{E})$ be a regular section with zero scheme $X$. Denote by $\mathcal{N}_{X/T}$ the normal bundle of $X$ in $T$. Consider the projective bundles $\mathbb{P}(\mathcal{E}^\vee) = T'$ and $\mathbb{P}(\mathcal{N}_{X/T}^\vee) = Z$ with projections $q$ and $p$ respectively. Associated to these projective bundles are canonical line bundles $\str_{\mathcal{E}}(1)$ and $\str_\mathcal{N}(1)$ together with canonical surjections
\begin{displaymath}
q^*\mathcal{E} \to \str_\mathcal{E}(1) \quad \text{and} \quad p^*\mathcal{N}_{X/S} \to \str_\mathcal{N}(1).
\end{displaymath}
The section $t$ induces a section $t'\in H^0(T', \str_\mathcal{E}(1))$ and we denote its divisor of zeroes by $Y$. The natural closed immersion $Z\to T'$ factors via $Y$. In summary we are concerned with the following commutative diagram
\begin{equation}\label{eq_diag}
\xymatrix{
Z = \mathbb{P}(\mathcal{N}_{X/T}^\vee) \ar[r]^(0.7)i \ar[d]_p & Y \ar[r]^(0.3)u  & \mathbb{P}(\mathcal{E}^\vee) = T' \ar[d]^{q} \\
X \ar[rr]_j && T
}
\end{equation}
and we use the notation occurring in it throughout.

By a theorem of Orlov \cite{OrlovSing2}*{Section 2} there is an equivalence of triangulated categories $\D_\mathrm{Sg}(X) \iso \D_\mathrm{Sg}(Y)$. We will assume throughout that $\str_{\mathcal{E}}(1)$ is ample (this is the case for instance if $T$ is affine, so all our results hold for complete intersection rings). In this case \cite{Stevensonclass}*{Theorem~8.8} tells us that thick subcategories of $\D_\mathrm{Sg}(X)$ are in order preserving bijection with specialization closed subsets of $\Sing Y$, the singular locus of $Y$.

We wish to show that every thick subcategory of $\D_\mathrm{Sg}(X)$ is closed under Grothendieck duality that is, if $\mathcal{C} \cie \D_\mathrm{Sg}(X)$ is a thick subcategory and $M\in \mathcal{C}$ then the image of $\RsHom(M,\str_X)$ is in $\mathcal{C}$. In particular we have $\mathcal{C}\iso \mathcal{C}^\op$. We do this in two steps: we first prove the functors giving Orlov's equivalence are sufficiently compatible with Grothendieck duality, and then we use this compatibility and a computation on $Y$ to prove the statement.

\begin{notation}
Suppose $Z$ is a Gorenstein scheme and $A$ is an object of $\D^b(\Coh Z)$. We use $A^\vee$ to denote $\RsHom(A, \str_Z)$ and note that the canonical morphism $A \to (A^{\vee})^{\vee}$ is an isomorphism as $\str_Z$ is a dualizing sheaf on $Z$.
\end{notation}

In the next two lemmas we describe the interaction of the functors $p^*$ and $i_*$ with the duality; these statements are surely well known to the experts. The behaviour of the pullback is standard and so we simply indicate the idea of a proof.

\begin{lem}\label{lem_pull}
Let $M$ be an object of $\D^b(\Coh X)$. Then the canonical morphism 
\begin{displaymath}
p^*M^\vee {\to} (p^*M)^\vee
\end{displaymath}
is an isomorphism.
\end{lem}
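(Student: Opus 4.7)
The plan is to invoke the standard compatibility of derived $\sHom$ with flat pullback. The morphism $p\colon Z \to X$ is a projective bundle, hence smooth and in particular flat, so $p^* = \L p^*$ is exact and $p^*\str_X = \str_Z$. The canonical comparison map $p^*M^\vee \to (p^*M)^\vee$ arises by applying $p^*$ to the evaluation $M \otimes_{\str_X}^{\L} M^\vee \to \str_X$ and taking adjoints.

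To check this map is an isomorphism, I would argue locally on $X$ (the assertion is local on $Z$, and $p$ is affine-locally a trivial projective bundle). Since $X$ has enough locally free sheaves and $M$ lies in $\D^b(\Coh X)$, one can find (locally) a resolution $P_\bullet \to M$ by finite rank locally free $\str_X$-modules, bounded above. Because $X$ is Gorenstein (as a locally complete intersection in the regular scheme $T$), $\str_X$ is a dualising complex with finite injective dimension, so we may compute $M^\vee \simeq \sHom(P_\bullet, \str_X)$ using only finitely many terms of the resolution; hence any issue of unboundedness of $P_\bullet$ to the left is harmless.

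Now each $P_n$ is locally free of finite rank, so $\sHom(P_n, -)$ is exact and commutes with the flat pullback $p^*$; therefore
\begin{displaymath}
p^*M^\vee \;\simeq\; p^*\sHom(P_\bullet, \str_X) \;\simeq\; \sHom(p^*P_\bullet, \str_Z).
\end{displaymath}
On the other hand, flatness of $p^*$ ensures $p^*P_\bullet \to p^*M$ is itself a locally free resolution on $Z$, so $(p^*M)^\vee \simeq \sHom(p^*P_\bullet, \str_Z)$ as well. Under these identifications the comparison map is the identity, and the lemma follows. The only real technical point to watch is the (possible) unboundedness of $P_\bullet$, which is neutralised by the finite injective dimension of $\str_X$; beyond that the proof is a direct unwinding of definitions.
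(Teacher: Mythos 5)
Your argument is correct and is essentially the paper's own (the paper merely records that the map exists formally and that the isomorphism can be checked locally, where it reduces to a standard commutative-algebra fact about flat base change for $\RsHom$ against a finitely generated module). Your local unwinding via flatness of the projective bundle $p$, finite locally free resolutions, and the finite injective dimension of $\str_X$ (and likewise $\str_Z$, or local K-projectivity of bounded-above complexes of finite frees) is exactly the standard fact being invoked, so nothing further is needed.
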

\begin{proof}
The claimed morphism always exists by nonsense. We can check locally that it is an isomorphism which reduces the proof to a standard fact from commutative algebra.
\end{proof}

Slightly more care is required for the pushforward along $i$ - we cannot expect that it commutes with duality on the nose. However, the result we need is again totally standard in our setting. Let us prepare a little before stating it. We know $i_*\colon \D^b(\Coh Z) \to \D^b(\Coh Y)$ has a right adjoint $i^!$. 
As $i$ is a regular closed immersion we know from \cite{HartshorneRD}*{III.7.3} that 
\begin{displaymath}
i^!\str_Y \iso \S^{-c+1}\wedge^{c-1}\cN_{Z/Y}
\end{displaymath}
where $\cN_{Z/Y}$ is the normal bundle. In particular $i^!\str_Y$ is a shift of a line bundle.

\begin{lem}\label{lem_push}
Let $M$ be an object of $\D^b(\Coh Z)$. Then there is a canonical isomorphism
\begin{displaymath}
(i_*M)^\vee \iso i_*(M^\vee \otimes i^!\str_Y).
\end{displaymath}
\end{lem}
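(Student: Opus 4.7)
The plan is to apply Grothendieck duality for the proper morphism $i$ (it is a closed immersion, hence proper) combined with the fact, already recorded just before the statement, that $i^!\str_Y$ is a shift of a line bundle and so is an invertible object of $\D^b(\Coh Z)$.

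First I would invoke the sheafified derived Grothendieck duality isomorphism
\begin{displaymath}
i_*\RsHom_Z(M, i^!N) \iso \RsHom_Y(i_*M, N),
\end{displaymath}
which exists because $i$ is proper and $M \in \D^b(\Coh Z)$; see \cite{HartshorneRD}*{III.11.1} or a modern account such as Lipman's. Specializing to $N = \str_Y$ this reads
\begin{displaymath}
i_*\RsHom_Z(M, i^!\str_Y) \iso \RsHom_Y(i_*M, \str_Y) = (i_*M)^\vee.
\end{displaymath}

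Next I would use invertibility of $i^!\str_Y$. Since $i^!\str_Y \iso \S^{-c+1}\wedge^{c-1}\cN_{Z/Y}$ is a shift of a line bundle, tensoring with it commutes with $\RsHom$, so there is a canonical isomorphism
\begin{displaymath}
\RsHom_Z(M, i^!\str_Y) \iso \RsHom_Z(M, \str_Z) \otimes i^!\str_Y = M^\vee \otimes i^!\str_Y.
\end{displaymath}
Applying $i_*$ and composing with the Grothendieck duality isomorphism above gives the desired canonical isomorphism
\begin{displaymath}
(i_*M)^\vee \iso i_*(M^\vee \otimes i^!\str_Y).
\end{displaymath}

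I do not expect a genuine obstacle here: both inputs (Grothendieck duality for a proper morphism into a Gorenstein target, and the invertibility of $i^!\str_Y$ for a regular closed immersion) are completely standard, and the paper itself emphasizes the statement is well known. The only point to be a little careful about is that the isomorphism is produced canonically from these two ingredients, so that it can later be used naturally in $M$; this is automatic since both of the cited isomorphisms are natural.
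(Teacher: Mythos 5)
Your argument is exactly the paper's: compose the sheafified Grothendieck duality isomorphism $\RsHom_Y(i_*M,\str_Y) \iso i_*\RsHom_Z(M,i^!\str_Y)$ with the identification $\RsHom_Z(M,i^!\str_Y) \iso M^\vee \otimes i^!\str_Y$, which holds because $i^!\str_Y$ is a shift of a line bundle. This is correct and matches the paper's proof step for step.
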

\begin{proof}
The proof comes down to composing a sequence of well known isomorphisms. We have
\begin{align*}
(i_*M)^\vee &= \RsHom_Y(i_*M, \str_Y) \\
&\iso i_*\RsHom_Z(M, i^!\str_Y) \\
&\iso i_*(\RsHom_Z(M,\str_Z) \otimes i^!\str_Y) \\
&= i_*(M^\vee \otimes i^!\str_Y),
\end{align*}
where the first non-trivial isomorphism is the usual duality isomorphism and the second holds since $i^!\str_Y$ is a shift of a line bundle.
\end{proof}

Now let us briefly recall some notation and facts about singular support. We will then be ready to do the simple computation in $\D^b(\Coh Y)$ which will essentially complete the proof of the theorem. We denote the projection $\D^b(\Coh Y) \to \D_{\mathrm{Sg}}(Y)$ by $\pi$ (in fact we use this notation for the projection regardless of the scheme involved, but this should cause no confusion). We recall from \cite{Stevensonclass}*{Section 3} that there is an action of $\D(\QCoh Y)$, the unbounded derived category of $Y$, on $\K_\mathrm{ac}(\Inj Y)$ the homotopy category of acyclic complexes of injective quasi-coherent $\str_Y$-modules. This gives, as in \cite{StevensonActions}, a notion of support for objects of $\K_\mathrm{ac}(\Inj Y)$ and hence for the compact objects $\D_\mathrm{Sg}(Y) \iso \K_\mathrm{ac}(\Inj Y)^c$ (where this equivalence is due to Krause \cite{KrStab}). Rather than assuming any knowledge of this machinery let us just say that the support of the image of an object $M$ of $\D^b(\Coh Y)$ can be defined as
\begin{displaymath}
\supp \pi M = \{y\in \Sing Y\; \vert \; M_y \notin \D^\mathrm{perf}(\str_{Y,y}) \}
\end{displaymath}
(this description can be found in the proof of \cite{Stevensonclass}*{Lemma~5.12}).

\begin{lem}\label{lem_dualsupp}
Let $M$ be an object of $\D^b(\Coh Y)$. There is an equality
\begin{displaymath}
\supp \pi M = \supp \pi (M^\vee).
\end{displaymath}
\end{lem}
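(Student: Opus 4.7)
The plan is to reduce the equality of supports to a pointwise statement using the description
\[
\supp \pi M = \{y \in \Sing Y \mid M_y \notin \D^\mathrm{perf}(\str_{Y,y})\}
\]
recalled just above the lemma. Concretely, it suffices to show that for every (singular) point $y \in Y$, the localization $M_y$ is perfect over $\str_{Y,y}$ if and only if $(M^\vee)_y$ is.

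First I would invoke the compatibility of $\RsHom$ with flat base change (in particular with restriction to an open neighbourhood and passage to the stalk) to identify
\[
(M^\vee)_y \iso \RHom_{\str_{Y,y}}(M_y,\str_{Y,y}).
\]
Next I would record that $Y$ is Gorenstein: it is a divisor of zeroes of a section of a line bundle on the regular scheme $T' = \mathbb{P}(\mathcal{E}^\vee)$, so it is locally a hypersurface in a regular scheme. Consequently $\str_Y$ is a dualising sheaf and the biduality morphism $M \to M^{\vee\vee}$ is an isomorphism; after localising, $M_y \iso (M_y^\vee)^\vee$.

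With these two ingredients in hand the lemma follows from a purely formal observation: for any noetherian ring $A$ the functor $\RHom_A(-,A)$ is triangulated and carries $A$ to $A$, hence preserves the thick subcategory generated by $A$, i.e.\ the perfect complexes. Thus if $M_y$ is perfect so is $\RHom_{\str_{Y,y}}(M_y,\str_{Y,y}) \iso (M^\vee)_y$; conversely, if $(M^\vee)_y$ is perfect then so is its $\str_{Y,y}$-dual, which by biduality is $M_y$. This gives the desired equality of supports.

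There is no real obstacle; the whole statement is a formal consequence of Grothendieck biduality on the Gorenstein scheme $Y$ combined with the stability of perfect complexes under $\RHom(-,\str_{Y,y})$. The only point requiring a little care is the compatibility of derived local $\sHom$ with localisation used to pass between $(M^\vee)_y$ and $\RHom_{\str_{Y,y}}(M_y,\str_{Y,y})$, but this is a standard application of flat base change for $\RsHom$.
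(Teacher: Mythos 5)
Your proposal is correct and follows essentially the same route as the paper: the paper's proof also reduces to the pointwise description of $\supp \pi M$ and the observation that for $y \in Y$ an object of $\D^b(\modu \str_{Y,y})$ is perfect if and only if its dual is, which is exactly your biduality-plus-preservation-of-perfects argument. You have simply spelled out the localisation and Gorenstein biduality details that the paper leaves implicit.
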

\begin{proof}
The result is immediate from the definition of the support we have given: for $y\in Y$ an object $N$ of $\D^b(\modu \str_{Y,y})$ is not perfect if and only if $N^\vee$ is not perfect.

\end{proof}

Given an object $M\in \D^b(\Coh X)$ we set
\begin{displaymath}
\supp_{\D(\QCoh Y)} \pi M = \supp (\pi i_*p^*M).
\end{displaymath}
We note that this is precisely the support given by the induced action of $\D(\QCoh Y)$ on $\K_\mathrm{ac}(\Inj X)$ as in \cite{Stevensonclass}*{Theorem 8.8}; in particular it gives rise to the support theory on $\D_\mathrm{Sg}(X)$ classifying thick subcategories.

\begin{thm}\label{thm_duality}
For every object $M$ of $\D^b(\Coh X)$ there is an equality
\begin{displaymath}
\supp_{\D(\QCoh Y)} \pi M = \supp_{\D(\QCoh Y)} \pi M^\vee.
\end{displaymath}
Hence every thick subcategory of $\D_\mathrm{Sg}(X)$ is self dual under the duality of $\D_\mathrm{Sg}(X)$ induced by Grothendieck duality.
\end{thm}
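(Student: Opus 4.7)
The plan is to combine Lemmas \ref{lem_pull}, \ref{lem_push} and \ref{lem_dualsupp} to move the duality past $i_*p^*$ with only controlled error, and then appeal to the classification of thick subcategories of $\D_\mathrm{Sg}(X)$ from \cite{Stevensonclass}*{Theorem~8.8}.

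First I would compute $(i_*p^*M)^\vee$. Applying Lemma \ref{lem_push} with $N = p^*M$ gives
\begin{displaymath}
(i_*p^*M)^\vee \iso i_*\bigl((p^*M)^\vee \otimes i^!\str_Y\bigr),
\end{displaymath}
and then Lemma \ref{lem_pull} identifies $(p^*M)^\vee$ with $p^*(M^\vee)$, so altogether
\begin{displaymath}
(i_*p^*M)^\vee \iso i_*\bigl(p^*M^\vee \otimes i^!\str_Y\bigr).
\end{displaymath}
Next, Lemma \ref{lem_dualsupp} says duality preserves support on $Y$, hence
\begin{displaymath}
\supp_{\D(\QCoh Y)} \pi M \;=\; \supp \pi (i_*p^*M) \;=\; \supp \pi (i_*p^*M)^\vee \;=\; \supp \pi i_*\bigl(p^*M^\vee \otimes i^!\str_Y\bigr).
\end{displaymath}

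The remaining step is to absorb the twist by $i^!\str_Y$. Since $i$ is a regular closed immersion, $i^!\str_Y$ is a shift of a line bundle on $Z$, so the shift is irrelevant for support and it suffices to show that for any line bundle $L$ on $Z$ and any $N \in \D^b(\Coh Z)$ one has $\supp \pi i_*(N\otimes L) = \supp \pi i_* N$. This I would check stalkwise using the description of support: outside $i(Z)$ both stalks vanish, and at a point $y = i(z)$ the $\str_{Y,y}$-module stalks of $i_* N$ and $i_*(N\otimes L)$ differ by tensoring with $L_z \iso \str_{Z,z}$ along $\str_{Y,y} \to \str_{Z,z}$, which is an isomorphism, so one stalk is perfect over $\str_{Y,y}$ iff the other is. Applied with $N = p^*M^\vee$ this gives the desired equality $\supp_{\D(\QCoh Y)} \pi M = \supp_{\D(\QCoh Y)} \pi M^\vee$.

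For the second sentence I would invoke \cite{Stevensonclass}*{Theorem~8.8}, which under the ampleness hypothesis on $\str_{\mathcal{E}}(1)$ classifies thick subcategories of $\D_\mathrm{Sg}(X)$ by specialization closed subsets of $\Sing Y$ via the support $\supp_{\D(\QCoh Y)}\pi(-)$. Since this invariant is unchanged by replacing an object with its Grothendieck dual, every thick subcategory of $\D_\mathrm{Sg}(X)$ is stable under $(-)^\vee$, and in particular $\mathcal{C} \iso \mathcal{C}^\op$.

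The main obstacle is really just the bookkeeping in the first two paragraphs; the conceptual content is that Lemma \ref{lem_push} introduces a line bundle twist, and one must verify that this twist is invisible to the support on $Y$ because $L$ is locally trivial on $Z$ and $i_*$ only remembers the $\str_{Z,z}$-module structure at points of $Z$. Once this is in place, the rest is an appeal to the classification.
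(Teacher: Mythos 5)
Your proposal is correct and takes essentially the same route as the paper: dualize $i_*p^*M$ via Lemmas \ref{lem_push} and \ref{lem_pull}, use Lemma \ref{lem_dualsupp} to compare supports, and conclude with the classification of \cite{Stevensonclass}*{Theorem~8.8}. Your direct stalkwise verification that the twist by $i^!\str_Y$ does not affect the support plays the role of the paper's appeal to $\Sing Y \cie i(\Sing Z)$, and works just as well.
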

\begin{proof}
Let $M$ be an object of $\D^b(\Coh X)$ as in the statement. By Lemmas \ref{lem_pull} and \ref{lem_push} we have an isomorphism
\begin{displaymath}
(i_*p^*M)^\vee \iso i_*(p^*(M^\vee) \otimes i^!\str_Y).
\end{displaymath}
Using our definition of the support (or by \cite{StevensonActions}*{Remark~8.7}) the support is computed stalkwise. Thus as $i^!\str_Y$ is a line bundle and, by (essentially) \cite{Stevensonclass}*{Lemma~10.2}, $\Sing Y \cie i(\Sing Z)$ (so the pushforward doesn't affect the stalks we care about) we deduce the first equality below
\begin{displaymath}
\supp \pi (i_*p^*M)^\vee = \supp \pi i_*p^*(M^\vee) = \supp_{\D(\QCoh Y)} \pi M^\vee.
\end{displaymath}
On the other hand using the last lemma we can write
\begin{displaymath}
\supp \pi (i_*p^* M)^\vee = \supp \pi i_*p^*M = \supp_{\D(\QCoh Y)} \pi M.
\end{displaymath}
Combining these two sets of equalities we see $\pi M$ and $\pi M^\vee$ have identical supports.

By the classification of thick subcategories given in \cite{Stevensonclass}*{Theorem~8.8} it follows that a thick subcategory of $\D_\mathrm{Sg}(X)$ contains $\pi M$ if and only if it contains $\pi (M^\vee)$. Hence every thick subcategory of the singularity category of $X$ is stable under the duality given by $(-)^\vee$.

\end{proof}

When $X$ is the spectrum of a complete intersection ring this gives a different proof of \cite{AvBuchSupp}*{Theorem 5.6 (10)} in the case of maximal Cohen-Macaulay modules (as well as extending it to our finer notion of support).

For rings we can, of course, restate this in the language of maximal Cohen-Macaulay (MCM) modules and feel it is worth explicitly doing so. Let $R$ be a complete intersection ring and, as usual, denote by $\MCM R$ the Frobenius exact category of maximal Cohen-Macaulay $R$-modules and by $\sMCM R$ its stable category. Note that for a MCM module $M$, considered as an object of $\D^b(\modu R)$, there is an isomorphism $M^\vee = \Hom_R(M,R)$ i.e., Grothendieck duality just sends $M$ to the dual MCM module (recall that $\Hom_R(M,R)$ is also MCM).

In order to state the corollary let us recall the notion of a thick subcategory of an exact category.

\begin{defn}
Let $\mathcal{A}$ be an exact category. We say a full subcategory $\mathcal{B}\cie \mathcal{A}$ is \emph{thick} if $\mathcal{B}$ is closed under direct summands and if for every short exact sequence
\begin{displaymath}
0 \to A \to B \to C \to 0
\end{displaymath}
where two of $A, B$, and $C$ lie in $\mathcal{B}$ then the third object also belongs to $\mathcal{B}$.
\end{defn}

\begin{cor}\label{cor_dualityMCM}
Let $R$ be a complete intersection. Then every thick subcategory of $\sMCM R$ is closed under taking $R$-duals i.e., a thick subcategory of $\sMCM R$ contains $M$ if and only if it contains $\Hom_R(M,R)$. Moreover, this is already true in the exact category $\MCM R$: every thick subcategory of $\MCM R$ containing $R$ is closed under $R$-duals.
\end{cor}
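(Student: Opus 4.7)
The plan is to reduce both assertions to Theorem \ref{thm_duality} via Buchweitz's equivalence $\sMCM R \iso \D_\mathrm{Sg}(R)$. For a MCM module $M$ over the Gorenstein ring $R$ all of the higher $\Ext^i_R(M,R)$ vanish, so the derived dual $\RsHom(M,R)$ collapses to the ordinary $R$-dual $\Hom_R(M,R)$, which is again MCM. Hence the first claim is just the specialization of Theorem \ref{thm_duality} to MCM modules, transported across Buchweitz's equivalence: a thick subcategory of $\sMCM R$ contains $M$ iff it contains $\Hom_R(M,R)$.

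For the second claim, let $\mathcal{B} \cie \MCM R$ be a thick subcategory of the exact category $\MCM R$ containing $R$. The first step is to observe that $\mathcal{B}$ automatically contains every finitely generated projective. Indeed, the split short exact sequence $0 \to M \to M \oplus N \to N \to 0$ combined with the two-out-of-three axiom shows $\mathcal{B}$ is closed under finite direct sums, so $R \in \mathcal{B}$ yields $R^n \in \mathcal{B}$, and closure under direct summands then gives all projectives.

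Next I would pass to the image $\bar{\mathcal{B}}$ in $\sMCM R$ and argue that it is a thick triangulated subcategory. This is standard Frobenius-category formalism: distinguished triangles in $\sMCM R$ lift to short exact sequences in $\MCM R$, so closure of $\mathcal{B}$ under extensions (and under suspension, via the defining short exact sequences $0 \to M \to P \to \S M \to 0$ using $P \in \mathcal{B}$ projective) forces $\bar{\mathcal{B}}$ to be closed under cones and shifts; idempotent closure is inherited. By the first part of the corollary, $\bar{\mathcal{B}}$ is stable under $(-)^\vee$.

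Finally, for $M \in \mathcal{B}$, the inclusion $\overline{M^\vee} \in \bar{\mathcal{B}}$ means that $M^\vee \oplus P \iso N \oplus Q$ in $\MCM R$ for some $N \in \mathcal{B}$ and projectives $P, Q$. Since $N \oplus Q$ lies in $\mathcal{B}$ (projectives are in $\mathcal{B}$ and $\mathcal{B}$ is closed under finite direct sums), closure under summands yields $M^\vee \in \mathcal{B}$. No step is genuinely hard; the only point deserving care is verifying that $\bar{\mathcal{B}}$ is triangulated-thick in $\sMCM R$, which is routine once one remembers that projective summands are invisible in the stable category.
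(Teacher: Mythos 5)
Your proposal is correct and follows essentially the same route as the paper: reduce the first claim to Theorem~\ref{thm_duality} via Buchweitz's equivalence $\sMCM R \iso \D_\mathrm{Sg}(R)$, using that $\RsHom(M,R)\simeq\Hom_R(M,R)$ for $M$ maximal Cohen-Macaulay, and deduce the second claim from the correspondence between thick subcategories of $\MCM R$ containing $R$ and thick subcategories of $\sMCM R$. The only difference is that the paper simply cites this correspondence, whereas you spell out its proof (projectives lie in $\mathcal{B}$, the image $\bar{\mathcal{B}}$ is triangulated-thick, and stable isomorphism plus closure under summands pulls $M^\vee$ back into $\mathcal{B}$), which is a correct and harmless elaboration.
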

\begin{proof}
The first statement follows from the equivalence of $\D_{\mathrm{Sg}}(R)$ and $\sMCM R$ (see \cite{Buchweitzunpub}) together with the observation that duality for the singularity category corresponds to taking $R$-duals under this equivalence. The statement for the exact category $\MCM R$ follows from the statement for the stable category via the bijection between the thick subcategories of $\MCM R$ containing $R$ and the thick subcategories of $\sMCM R$.
\end{proof}

\begin{rem}
Our results are also valid for rings which are locally abstract hypersurfaces.
\end{rem}

\begin{rem}
This answers a question posed in \cite{DTradius}*{Question~5.3}, namely it settles the implication $(1) \rimp (2)$. The question there concerns resolving subcategories but these are the same as thick subcategories containing $R$ by \cite{DTradius}*{Corollary~4.16}.
\end{rem}

\section{Symmetry of orthogonality}

Our result on duality has implications for the symmetry of the vanishing of homs in the singularity category. We continue to use the notation and hypotheses of the last section. Given a set of objects $\mathcal{A}\cie \D_\mathrm{Sg}(X)$ we denote the smallest thick subcategory containing $\mathcal{A}$ by $\langle \mathcal{A}\rangle$ and the right perpendicular of this thick subcategory by $\langle\mathcal{A}\rangle^\perp$. Note that the right perpendicular is again a thick subcategory.

\begin{thm}\label{thm_symm1}
Let $M$ and $N$ be objects of $\D_\mathrm{Sg}(X)$. Then 
\begin{displaymath}
N \in \langle M \rangle^\perp \;\; \text{if and only if} \;\; M \in \langle N \rangle^\perp.
\end{displaymath}
\end{thm}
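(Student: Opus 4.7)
The plan is to deduce Theorem \ref{thm_symm1} as a direct formal consequence of Theorem \ref{thm_duality}. Grothendieck duality $(-)^\vee$ is a contravariant triangulated auto-equivalence of $\D_\mathrm{Sg}(X)$, in particular supplying natural isomorphisms $\Hom(A,B) \iso \Hom(B^\vee, A^\vee)$, and Theorem \ref{thm_duality} tells us every thick subcategory is stable under this auto-equivalence.

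First I would observe that stability of $\langle M \rangle$ under $(-)^\vee$ promotes to an equality $\langle M \rangle = \langle M^\vee \rangle$: the inclusion $\langle M^\vee \rangle \cie \langle M \rangle$ is immediate from $M^\vee \in \langle M \rangle$, while dualising once more yields the reverse inclusion using $(M^\vee)^\vee \iso M$. Likewise $\langle N \rangle = \langle N^\vee \rangle$.

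The symmetry then reduces to a short chain of equivalences. Assume $N \in \langle M \rangle^\perp$, so that $\Hom(M', N[k]) = 0$ for every $M' \in \langle M \rangle$ and every $k \in \int$. Applying the duality isomorphism to each such Hom-group gives $\Hom(N^\vee[-k], (M')^\vee) = 0$. Since $M' \mapsto (M')^\vee$ is a bijection of $\langle M \rangle$ onto itself by the first step, and since we are quantifying over all shifts, this is equivalent to $\Hom(N^\vee, M''[k]) = 0$ for every $M'' \in \langle M \rangle$ and every $k \in \int$. In other words $\langle M \rangle$ is contained in the right perpendicular of $\{N^\vee\}$, and since this right perpendicular is a thick subcategory containing $N^\vee$ it equals $\langle N^\vee \rangle^\perp = \langle N \rangle^\perp$. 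Because $\langle N \rangle^\perp$ is thick, the inclusion $\langle M \rangle \cie \langle N \rangle^\perp$ is equivalent to $M \in \langle N \rangle^\perp$. The converse is the same argument with the roles of $M$ and $N$ reversed.

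There is no serious obstacle here: the symmetry is a formal consequence of self-duality of every thick subcategory together with the dualising behaviour $(X[k])^\vee \iso X^\vee[-k]$, with the actual content residing entirely in Theorem \ref{thm_duality}.
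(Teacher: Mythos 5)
Your proof is correct and takes essentially the same route as the paper: the symmetry is deduced purely formally from Theorem \ref{thm_duality} together with the contravariant isomorphism $\Hom(A,B)\iso\Hom(B^\vee,A^\vee)$, the paper merely phrasing it as $\langle N\rangle\cie\langle M\rangle^\perp$ iff $\langle M\rangle\cie\langle N\rangle^\perp$ (quantifying over both thick closures at once) rather than passing through the equalities $\langle M\rangle=\langle M^\vee\rangle$ and $\langle N\rangle=\langle N^\vee\rangle$. One wording slip to fix: the right perpendicular of $\{N^\vee\}$ does not contain $N^\vee$; it coincides with $\langle N^\vee\rangle^\perp$ because the left orthogonal of any fixed object is a thick subcategory, so containing $N^\vee$ forces it to contain all of $\langle N^\vee\rangle$.
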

\begin{proof}
First note that $N \in \langle M \rangle^\perp$ if and only if $\langle N \rangle \cie \langle M \rangle^\perp$. For $N' \in \langle N \rangle$ and $M' \in \langle M \rangle$ we have
\begin{align*}
\Hom_{\D_\mathrm{Sg}(X)}(N',M') &= \Hom_{\D_\mathrm{Sg}(X)^\op}(M',N') \\
&\iso \Hom_{\D_\mathrm{Sg}(X)}(\RHom(M',R),\RHom(N',R)).
\end{align*}
So, by Theorem~\ref{thm_duality}, we see that $\langle N \rangle\cie \langle M \rangle^\perp$ iff for all $N' \in \langle N \rangle$ and $M' \in \langle M \rangle$
\begin{displaymath}
\Hom_{\D_\mathrm{Sg}(X)}(N',M') \iso \Hom_{\D_\mathrm{Sg}(X)}(\RHom(M',R),\RHom(N',R)) = 0
\end{displaymath}
i.e., $\langle M \rangle \cie \langle N \rangle^\perp$. Hence $N\in \langle M \rangle^\perp$ if and only if $M \in \langle N \rangle^\perp$ as claimed.
\end{proof}

This gives a version for the singularity category of the symmetry of vanishing part of a theorem of Avramov and Buchweitz \cite{AvBuchSupp}*{Theorem~III}. It is perhaps worth pointing out that, in constrast to the work of Avramov and Buchweitz, we do not rely on cohomology operators.

We have the following corollary which gives a statement closer in spirit to the theorem of Avramov and Buchweitz (and, in fact, restates and gives a new proof of part of it).

\begin{cor}
Let $R$ be a complete intersection ring and let $M$ and $N$ be maximal Cohen-Macaulay $R$-modules. Then the following are equivalent:
\begin{itemize}
\item[$(i)$] $\Ext^i(M,N) = 0 \;\; \forall \; i\gg 0$;
\item[$(ii)$] $\Ext^i(N,M) = 0 \;\; \forall \; i\gg 0$;
\item[$(iii)$] $N \in \langle M \rangle^\perp$ in $\sMCM(R)$;
\item[$(iv)$] $M \in \langle N \rangle^\perp$ in $\sMCM(R)$.
\end{itemize}
\end{cor}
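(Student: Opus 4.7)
The approach is to deduce the corollary from Theorem~\ref{thm_symm1} together with Buchweitz's equivalence $\sMCM(R) \simeq \D_{\mathrm{Sg}}(R)$. Under this equivalence, Theorem~\ref{thm_symm1} reads precisely as the equivalence $(iii) \iff (iv)$, so that part comes for free. The remaining task is to tie the ring-theoretic conditions $(i)$ and $(ii)$ to the triangulated conditions $(iii)$ and $(iv)$; by the symmetry obtained from exchanging $M$ and $N$ it is enough to establish $(i) \iff (iii)$.

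To reformulate $(iii)$, I would observe that $\langle M \rangle$ is closed under the shift $\Sigma$ of $\sMCM(R)$, so $N \in \langle M \rangle^{\perp}$ forces $\underline{\Hom}_R(M, \Sigma^j N) = 0$ for every $j \in \mathbb{Z}$; conversely, for fixed $N$ the collection of $M' \in \sMCM(R)$ satisfying this family of vanishings forms a thick subcategory, so the two formulations are equivalent. Since stable Hom over a Gorenstein ring computes Tate cohomology $\widehat{\Ext}$, which agrees with ordinary $\Ext$ in positive degrees, condition $(iii)$ is exactly the vanishing of $\widehat{\Ext}^j_R(M, N)$ for every $j \in \mathbb{Z}$. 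The direction $(iii) \Rightarrow (i)$ is then immediate, since it amounts to forgetting all but the positive degrees.

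The only nontrivial ingredient is the converse $(i) \Rightarrow (iii)$: eventual vanishing of $\Ext^i_R(M, N)$ must imply vanishing of the full Tate cohomology $\widehat{\Ext}^{\ast}_R(M, N)$. This is a standard feature of the complete intersection setting, for instance via the action of Gulliksen's cohomology operators, which makes $\Ext^{\ast}_R(M, N)$ finitely generated over a polynomial ring and forces eventual Ext vanishing to propagate, or via Avramov--Buchweitz style rigidity results, and I would simply cite it from the literature. This is the main obstacle in the sense that it is the only step where additional input beyond Theorem~\ref{thm_symm1} is required; once it is in hand, the four conditions of the corollary collapse to a single statement and the chain of equivalences is complete.
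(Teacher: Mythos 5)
Your proposal is correct and its outer structure matches the paper's: the equivalence of $(iii)$ and $(iv)$ comes from the duality results (the paper quotes Corollary~\ref{cor_dualityMCM}; your use of Theorem~\ref{thm_symm1} plus Buchweitz's equivalence amounts to the same thing), and $(iii)\Rightarrow(i)$ is the easy direction, as you say. The genuine divergence is in the key implication $(i)\Rightarrow(iii)$. You outsource it to the classical complete intersection rigidity statement -- eventual vanishing of $\Ext$ forces vanishing of all Tate cohomology -- which in the literature is proved via Gulliksen's cohomology operators and support varieties (Avramov--Buchweitz). That citation is mathematically legitimate (at least in the local case; for the not-necessarily-local complete intersections allowed here one would still want a word about localizing), so there is no gap, but it buys the result with exactly the machinery this paper advertises avoiding: the text stresses that, in contrast to Avramov--Buchweitz, no cohomology operators are used, and bills the corollary as a new proof of part of their theorem. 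The paper's own argument for $(i)\Rightarrow(iii)$ stays inside the classification framework: it forms $\mathcal{K}=\{L\in\sMCM(R)\mid \underline{\Hom}(M,\Sigma^i L)=0\ \forall\, i\geq 0\}$, notes this is a preaisle (closed under sums, summands, extensions and positive suspensions), and invokes \cite{DTradius}*{Corollary~4.16} -- itself resting on the thick subcategory classification -- to conclude that such a preaisle is automatically thick, hence $\mathcal{K}=\langle M\rangle^\perp$; condition $(i)$ places some suspension of $N$ in $\mathcal{K}$, and thickness then gives $N\in\langle M\rangle^\perp$. So your route is valid but leans on the operator-theoretic input and forfeits the independence that is the point of the paper's proof, whereas the paper's route keeps the argument operator-free at the cost of appealing to the Dao--Takahashi preaisle result.
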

\begin{proof}
Corollary~\ref{cor_dualityMCM} gives the equivalence of (iii) and (iv) and it is evident that (iii) and (iv) imply (i) and (ii) respectively. We prove that (i) implies (iii) (the proof that (ii) implies (iv) being identical). Consider the full subcategory
\begin{displaymath}
\mathcal{K} = \{L\in \sMCM(R) \; \vert \; \Hom(M, \S^i L) = 0 \;\; \forall i\geq 0\}
\end{displaymath}
of the stable category. One sees easily that the subcategory $\mathcal{K}$ is a preaisle i.e., it is closed under sums, summands, positive suspensions and extensions. Using \cite{DTradius}*{Corollary 4.16} (and noting that the result remains true for complete intersections of the form we consider) we see it must therefore be a thick subcategory - it is also closed under desuspensions. Hence $\mathcal{K}$ is actually $\langle M \rangle^\perp$. The claim now follows as by (i) some $\S^i N \in \mathcal{K}$ and hence $N\in \mathcal{K} = \langle M \rangle^\perp$.
\end{proof}

\section{Thick subcategories of the bounded derived category}

We wish to extend the self-duality result for thick subcategories of the singularity category to the bounded derived category of $X$, where $X$ is as in Section \ref{sec_duality1}. We do this by first extending the classification of thick subcategories of $\D_\mathrm{Sg}(X)$ of \cite{Stevensonclass} to $\D^b(\Coh X)$. We note that Iyengar has reported \cite{IyengarLCI} a classification of the thick subcategories of $\D^b(\modu R)$, for $R$ a locally complete intersection essentially of finite type over a field, in terms of the spectrum of the Hochschild cohomology ring. When both results apply our result gives an alternative description of the thick subcategories in terms of $\Spec R$ and the generic hypersurface $Y$; from this one can recover the spectrum of the Hochschild cohomology, see Proposition \ref{prop_HH}.

Let $\Th^\otimes(\D^\mathrm{perf}(X))$ denote the lattice of thick tensor ideals of $\D^\mathrm{perf}(X)$. Similarly we let $\Th^\otimes(\D^b(\Coh X))$ and $\Th(\D_\mathrm{Sg}(X))$ denote the lattice of thick subcategories of $\D^b(\Coh X)$ which are closed under tensoring with objects of $\D^\mathrm{perf}(X)$ i.e.\ the $\D^\mathrm{perf}(X)$\emph{-submodules}, and the lattice of thick subcategories of $\D_\mathrm{Sg}(X)$ respectively. For a collection of objects $\mathcal{M} \cie \D^b(\Coh X)$ we will denote by $\langle \mathcal{M} \rangle_\otimes$ the smallest $\D^\mathrm{perf}(X)$-submodule containing $\mathcal{M}$ and we use similar notation for ideals in the perfect complexes and thick subcategories of the singularity category.

Recall that there are lattice isomorphisms between $\Th^\otimes(\D^\mathrm{perf}(X))$ and the lattice of specialization closed subsets of $X$ by work of Thomason \cite{Thomclass} and between $\Th(\D_\mathrm{Sg}(X))$ and specialization closed subsets of $\Sing Y$ where $Y$ is a generic hypersurface as in \eqref{eq_diag}. We will use these classifications together with the quotient sequence
\begin{displaymath}
\D^\mathrm{perf}(X) \to \D^b(\Coh X) \to \D_\mathrm{Sg}(X)
\end{displaymath}
to describe $\Th^\otimes(\D^b(\Coh X))$. We note that this is not possible for a general quotient sequence. 

For $\mathcal{K} \in \Th^\otimes(\D^b(\Coh X))$ we set
\begin{displaymath}
\mathcal{K}^\mathrm{perf} = \mathcal{K} \intersec \D^\mathrm{perf}(X) \quad \text{and} \quad \mathcal{K}_\mathrm{Sg} = \mathcal{K}/\mathcal{K}^\mathrm{perf}.
\end{displaymath}
We associate to $\mathcal{K}$ a pair of subsets, namely $\supph \mathcal{K}$, the homological support of $\mathcal{K}$
\begin{displaymath}
\supph \mathcal{K} = \{x\in X \; \vert \; \mathcal{K}_x \neq 0\} = \bigcup_{E\in \mathcal{K}} \{x\in X \; \vert \; E_x\neq 0\},
\end{displaymath}
and $\supp_{\D(\QCoh Y)}\pi \mathcal{K}$, the singular support of $\mathcal{K}$
\begin{displaymath}
\supp_{\D(\QCoh Y)} \pi\mathcal{K} = \bigcup_{E\in \mathcal{K}} \supp_{\D(\QCoh Y)} \pi E.
\end{displaymath}

The first computation we need, concerning the perfect part of $\mathcal{K}$, follows easily from the following slight generalization of a result of Dwyer, Greenlees, and Iyengar. The proof of this extension uses the machinery of tensor actions \cite{StevensonActions}; we prefer not to get into the preliminaries required here as this machinery is not needed explicitly elsewhere.

\begin{lem}\label{lem_genDGI}
Let $M$ be an object of $\D^b(\Coh X)$. Then $\mathcal{M} = \langle M \rangle_\otimes$ contains a perfect complex $W$ such that $M$ is in the localizing tensor ideal of $\D(\QCoh X)$ generated by $W$ i.e., in (a slight corruption of) the language of \cite{DGI_finiteness} $M$ is $\otimes$-proxy small.
\end{lem}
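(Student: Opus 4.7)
The plan is to adapt the argument of Dwyer, Greenlees, and Iyengar \cite{DGI_finiteness} from a noetherian commutative local ring to the present sheaf-theoretic setting, using the action of $\D(\QCoh X)$ on $\D^b(\Coh X)$ provided by \cite{StevensonActions}.

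First, since $M \in \D^b(\Coh X)$ and $X$ is noetherian, the homological support $V := \supph M$ is a closed subset of $X$ with quasi-compact complement. By Thomason's classification of thick tensor ideals of $\D^\mathrm{perf}(X)$ there is a perfect complex $K$ with $\supph K = V$; explicitly, $K$ can be built by assembling local Koszul complexes on generators of the ideal sheaf $\mathscr{I}_V$.

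The central step is to produce a perfect complex $W \in \langle M \rangle_\otimes$ whose support contains $V$. Since $\langle M \rangle_\otimes$ is closed under tensoring with perfect complexes, the object $K \otimes M$ already lies in $\langle M \rangle_\otimes$; the idea is then to extract, from iterated tensor products of $K$ with $M$ using finitely many triangles and summands, a perfect complex in $\langle M \rangle_\otimes$ of support $V$. In DGI's proof over a local ring this uses that the annihilator of $H^*(M)$ defines $V$, together with a Koszul-adic nilpotence argument; the tensor-action formalism of \cite{StevensonActions} lets me carry out the analogous construction sheaf-theoretically, reducing to affine patches via the localisation functoriality of the action. Once such a $W$ is in hand, the support-theoretic classification of localizing tensor ideals of $\D(\QCoh X)$ provided by the tensor action implies that the localizing tensor ideal generated by $W$ contains every object with support inside $\supph W = V$, and in particular contains $M$.

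I expect the main obstacle to be this middle step: producing a perfect complex inside $\langle M \rangle_\otimes$ with support equal to $\supph M$. Over a local ring this is essentially \cite{DGI_finiteness}*{Proposition~4.4}; the substance of the extension is to globalise that construction within the tensor-action framework, or equivalently to show that $\langle M \rangle_\otimes \cap \D^\mathrm{perf}(X)$ corresponds under Thomason's classification to the Thomason subset $V$.
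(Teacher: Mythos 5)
Your overall architecture does match the paper's: reduce to producing a perfect complex inside $\langle M\rangle_\otimes$ whose homological support equals $\supph M$, then conclude that $M$ lies in the localizing tensor ideal it generates via the classification of localizing ideals of $\D(\QCoh X)$ (the paper cites \cite{AJS3}*{Corollary~4.13} for exactly this last step). But the step you yourself flag as ``the main obstacle'' is the entire content of the lemma, and your sketch of it does not work as stated. Extracting a perfect complex of full support from iterated tensor products of $M$ with Koszul complexes via an ``annihilator of $H^*(M)$ plus Koszul-adic nilpotence'' argument cannot succeed in this generality: if it did, every object of $\D^b(\modu R)$ would be proxy small over every noetherian ring, which is false -- this property fails outside complete intersections (and in fact characterizes them). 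Nowhere do you invoke the standing hypothesis that $X$ is the zero scheme of a regular section of a bundle on a regular scheme, i.e.\ that $X$ has an affine cover $X=\cup_i U_i$ by spectra of complete intersection rings. That hypothesis is exactly what the paper uses: it applies \cite{DGI_finiteness}*{Theorem~9.4} (not Proposition~4.4; and its proof is specific to complete intersections, not a general nilpotence argument) on each $U_i$ to obtain $W_i \in \langle M\vert_{U_i}\rangle$.

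The globalisation you defer to ``localisation functoriality of the action'' is also a genuine issue rather than bookkeeping. The $W_i$ live only on the opens $U_i$, so one must first lift them to perfect complexes on $X$ -- the paper lifts $W_i \oplus \S W_i$ in the style of Thomason--Trobaugh and corrects the support by tensoring with a perfect complex supported on $\supph M$ -- and then, the real point, show that the lifts $\widetilde{W}_i$ lie in $\langle M\rangle_\otimes$ globally. The paper achieves this by passing to $\K(\Inj X)$ with its $\D(\QCoh X)$-action, applying the local-to-global principle (\cite{StevensonActions}*{6.1 and 6.9}) to the smallest action-closed localizing subcategory containing $M$, and identifying its compact objects with $\langle M\rangle_\otimes$ via \cite{StevensonActions}*{Lemma~3.10}. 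Without an argument of this kind (equivalently, without actually proving your reformulation that $\langle M\rangle_\otimes \cap \D^\mathrm{perf}(X)$ corresponds under Thomason's classification to $\supph M$), your proof has a hole at its central step.
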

\begin{proof}
Let $X = \cup_{i=1}^n U_i$ be an open affine cover of $X$ such that each $U_i$ is the spectrum of a complete intersection; such a cover exists as $X$ is quasi-compact and the zero locus of a regular section. For each $i$ there exists, by \cite{DGI_finiteness}*{Theorem~9.4} (noting that the local hypothesis is not necessary in the case we are interested in), in $\D^b(\Coh U_i)$ a $W_i$ in $\langle M\vert_{U_i} \rangle$ as in the statement. We can obtain from each $W_i$ a $\widetilde{W}_i \in \D^\mathrm{perf}(X)$, such that
\begin{displaymath}
\supph_{U_i} W_i \cie \supph \widetilde{W}_i \cie \supph M.
\end{displaymath}
This is done by lifting $W_i \oplus \S W_i$ and then, if necessary, tensoring with a perfect complex with the same support as $M$ in order to fix the support. Thus $(\widetilde{W}_j)\vert_{U_i} \in \langle M\vert_{U_i}\rangle$ for each $i$ and $j$ by the classification of thick subcategories of $\D^\mathrm{perf}(U_i)$ since 
\begin{displaymath}
\supph_{U_i} (\widetilde{W}_j)\vert_{U_i} \cie \supph_{U_i} M\vert_{U_i} = \supph_{U_i} W_i.
\end{displaymath}
We now use the local-to-global principle to deduce the desired statement. There is an action of $\D(\QCoh X)$ on $\K(\Inj X)$, denoted $\odot$, which restricts to an action of $\D^\mathrm{perf}(X)$ on $\K(\Inj X)^c \iso \D^b(\Coh X)$ which is compatible with this latter equivalence (\cite{Stevensonclass}*{Lemma~5.4}). Since the structure we are interested in is compatible with this equivalence we don't introduce notation to distinguish between objects of $\D(\QCoh X)$ and their K-injective resolutions viewed as objects of $\K(\Inj X)$.

Let $\mathcal{N}$ be the smallest localizing subcategory of $\K(\Inj X)$ containing $M$ and closed under the $\D(\QCoh X)$-action. Since $\mathcal{N}$ contains each $M\vert_{U_i}$ it contains the $(\widetilde{W}_j)\vert_{U_i}$ and thus by the local-to-global principle (see \cite{StevensonActions}*{6.1 and 6.9}) it contains the $\widetilde{W}_j$. Now observe that there are equalities
\begin{align*}
\mathcal{N}^c &= \langle E\odot M \; \vert \; E\in \D^\mathrm{perf}(X) \rangle_\mathrm{loc} \intersec \K(\Inj X)^c \\
&= \langle E \odot M \; \vert \; E\in \D^\mathrm{perf}(X) \rangle \\
&\iso \mathcal{M}
\end{align*}
where the first equality and last equivalence use \cite{StevensonActions}*{Lemma~3.10}. Thus the $\widetilde{W}_j$ are all in $\mathcal{M}$ and it follows from the construction together with the classification of localizing ideals of $\D(\QCoh X)$ (see for instance \cite{AJS3}*{Corollary~4.13}) that the localizing ideal generated by $\oplus_i \widetilde{W}_i$ contains $M$.
\end{proof}

\begin{lem}\label{lem_perfsupp}
Let $\mathcal{K}$ be a thick $\D^\mathrm{perf}(X)$-submodule of $\D^b(\Coh X)$. The thick subcategory $\mathcal{K}^\mathrm{perf}$ is non-zero if $\mathcal{K}$ is non-zero and there is an equality
\begin{displaymath}
\supph \mathcal{K} = \supph \mathcal{K}^\mathrm{perf}.
\end{displaymath}
\end{lem}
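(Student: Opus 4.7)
The plan is as follows. The inclusion $\supph \mathcal{K}^\mathrm{perf} \cie \supph \mathcal{K}$ is immediate from $\mathcal{K}^\mathrm{perf} \cie \mathcal{K}$, and the non-vanishing of $\mathcal{K}^\mathrm{perf}$ will drop out of the reverse inclusion as a byproduct. So what I really need to prove is that for every object $M$ of $\mathcal{K}$ one has $\supph M \cie \supph \mathcal{K}^\mathrm{perf}$.

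Fix such an $M$. The key input is Lemma \ref{lem_genDGI}: it produces a perfect complex $W \in \langle M \rangle_\otimes$ with the additional feature that $M$ belongs to the localizing tensor ideal of $\D(\QCoh X)$ generated by $W$. Since $\mathcal{K}$ is a thick $\D^\mathrm{perf}(X)$-submodule containing $M$, one has $\langle M \rangle_\otimes \cie \mathcal{K}$; combined with $W \in \D^\mathrm{perf}(X)$ this places $W$ in $\mathcal{K}^\mathrm{perf}$.

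It remains to extract the containment $\supph M \cie \supph W$ from the stronger fact that $M$ lies in the localizing tensor ideal generated by $W$. The natural argument is to observe that the full subcategory of $\D(\QCoh X)$ consisting of objects whose homological support is contained in $\supph W$ is closed under coproducts, shifts, summands, triangles and tensoring with arbitrary objects of $\D(\QCoh X)$; it is therefore a localizing tensor ideal containing $W$, hence contains the whole localizing tensor ideal generated by $W$, and in particular contains $M$. Taking unions over $M \in \mathcal{K}$ gives $\supph \mathcal{K} \cie \supph \mathcal{K}^\mathrm{perf}$.

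For the non-vanishing statement, if $\mathcal{K} \neq 0$ pick a non-zero $M \in \mathcal{K}$. Then $\supph M$ is non-empty, and the associated $W \in \mathcal{K}^\mathrm{perf}$ constructed above satisfies $\supph W \supseteq \supph M$, so is non-zero as well. The only substantive ingredient is the invocation of Lemma \ref{lem_genDGI}; the remaining work is routine bookkeeping about how homological support interacts with localizing tensor ideals, and I do not anticipate any real obstacle there.
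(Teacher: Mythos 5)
Your proof is correct and takes essentially the same route as the paper: both hinge on Lemma \ref{lem_genDGI}, observe that the resulting perfect complex $W$ lies in $\langle M \rangle_\otimes \cie \mathcal{K}$ and hence in $\mathcal{K}^\mathrm{perf}$, and then compare supports. The only difference is cosmetic: where the paper cites the classification of localizing tensor ideals of $\D(\QCoh X)$ to conclude $\supph W = \supph M$, you argue directly that the objects with stalks vanishing outside $\supph W$ form a localizing tensor ideal, which gives the one containment $\supph M \cie \supph W$ actually needed.
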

\begin{proof}
For every object $M$ of $\D^b(\Coh X)$ there is, by the last lemma, a perfect complex $W$ with
\begin{displaymath}
W \in \langle M \rangle_\otimes \quad \text{and} \quad M\in \langle W \rangle_{\mathrm{loc},\otimes}.
\end{displaymath}
It follows in a straightforward way from the properties of the support and \cite{AJS3}*{Corollary~4.13} that $\supph W = \supph M$. Both statements of the lemma are thus immediate.
\end{proof}

\begin{lem}\label{lem_sg_ff}
Let $\mathcal{K}$ be a thick $\D^\mathrm{perf}(X)$-submodule of $\D^b(\Coh X)$. Then the natural functor $\mathcal{K}_\mathrm{Sg} \to \D_\mathrm{Sg}(X)$ is fully faithful.
\end{lem}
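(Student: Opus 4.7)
The plan is to verify the standard Verdier criterion for full faithfulness of $\mathcal{K}_\mathrm{Sg}\to \D_\mathrm{Sg}(X)$: it suffices to show that for every morphism $s\colon L\to A$ in $\D^b(\Coh X)$ with $A\in \mathcal{K}$ and $\cone(s)\in \D^\mathrm{perf}(X)$ there is a morphism $t\colon N\to L$ with $N\in \mathcal{K}$ and $\cone(t)\in \D^\mathrm{perf}(X)$; by the octahedral axiom one then has $\cone(st)\in \mathcal{K}\cap\D^\mathrm{perf}(X)=\mathcal{K}^\mathrm{perf}$. Fullness of $\mathcal{K}_\mathrm{Sg}\to \D_\mathrm{Sg}(X)$ follows by replacing the left leg of any $\D_\mathrm{Sg}(X)$-roof with its precomposition by $t$, and faithfulness follows by refining any $\D_\mathrm{Sg}(X)$-level witness of a vanishing to live in $\mathcal{K}$ by the same device.

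The construction of $t$ reduces to the following factorization sub-claim: for every $A\in \mathcal{K}$ and $C\in \D^\mathrm{perf}(X)$, any morphism $A\to C$ factors through an object of $\mathcal{K}^\mathrm{perf}$. Granting this, I complete $s$ to a triangle $L\to A\to C\to L[1]$, factor $A\to C$ as $A\xrightarrow{\alpha} C'\xrightarrow{\gamma} C$ with $C'\in \mathcal{K}^\mathrm{perf}$, and let $N$ be the fibre of $\alpha$; then $N\in \mathcal{K}$ and the octahedron applied to $\gamma\alpha$ yields $t\colon N\to L$ whose cone is identified with the fibre of $\gamma$, hence perfect.

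To prove the sub-claim I use Grothendieck duality on the Gorenstein scheme $X$ to identify $\Hom(A,C)\cong \Hom(C^\vee,A^\vee)$, reducing the problem to producing a factorization $C^\vee\to Q\to A^\vee$ with $Q$ a perfect complex lying in $\mathcal{K}^\mathrm{perf}$, after which dualising yields the sub-claim. Applying Lemma \ref{lem_genDGI} to $A^\vee$ yields a perfect complex $W\in\langle A^\vee\rangle_\otimes$ with $A^\vee$ lying in the localising tensor ideal $\mathcal{L}_W\subseteq \D(\QCoh X)$ generated by $W$; dualising gives $W^\vee\in \langle A\rangle_\otimes\subseteq \mathcal{K}^\mathrm{perf}$. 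Since $C^\vee$ is compact in $\D(\QCoh X)$ and $\mathcal{L}_W$ is compactly generated with compact objects the thick tensor ideal $\langle W\rangle_\otimes$ in $\D^\mathrm{perf}(X)$, a standard homotopy-colimit argument factors $C^\vee\to A^\vee$ through an object $Q\in \langle W\rangle_\otimes$; dualising produces $A\to Q^\vee\to C$ with $Q^\vee\in \langle W^\vee\rangle_\otimes\subseteq \mathcal{K}^\mathrm{perf}$. The main obstacle is precisely this sub-claim: Lemma \ref{lem_genDGI} directly yields only factorisations of maps from a compact object into $A\in \mathcal{K}$, so to access the opposite direction one must invoke Grothendieck duality, which relies essentially on the Gorenstein hypothesis built into our assumptions on $X$.
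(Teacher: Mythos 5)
Your argument is correct in substance, but it follows a genuinely different route from the paper. The paper also reduces to a Verdier-type factorization criterion, but in the opposite direction: it suffices to factor every map \emph{from} a perfect complex \emph{into} an object of $\mathcal{K}$ through $\mathcal{K}^\mathrm{perf}$; this is then immediate from Lemma \ref{lem_perfsupp}, which identifies $\mathcal{K}^\mathrm{perf}$ with $\D^\mathrm{perf}_{\supph\mathcal{K}}(X)$, together with Orlov's lemma that any map from a perfect complex to an object of $\D^b_{Z}(\Coh X)$ factors through a perfect complex supported on $Z$. You instead verify the dual criterion (maps from $\mathcal{K}$ to perfect complexes), and prove it by combining Grothendieck duality with Lemma \ref{lem_genDGI} and the standard Neeman--Thomason fact that a map from a compact object into the localizing ideal generated by a compact $W$ factors through the thick ideal $\langle W\rangle_\otimes$; this is a legitimate alternative that avoids citing Orlov's formal-completion lemma, at the price of invoking duality and compact-generation machinery. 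Two remarks: first, the duality detour is avoidable --- had you set up the criterion in the paper's direction (perfect complex mapping into $E\in\mathcal{K}$), Lemma \ref{lem_genDGI} applied to $E$ itself, together with the same compact-factorization fact, gives the factorization directly, with no Gorenstein input; second, your inclusion ``$W^\vee\in\langle A\rangle_\otimes\cie\mathcal{K}^\mathrm{perf}$'' is misstated, since $\langle A\rangle_\otimes$ is generally not contained in the perfect complexes --- what you need (and what is true) is $\langle A\rangle_\otimes\cie\mathcal{K}$, whence $W^\vee$, being perfect and in $\mathcal{K}$, lies in $\mathcal{K}^\mathrm{perf}=\mathcal{K}\intersec\D^\mathrm{perf}(X)$, and similarly $Q^\vee\in\langle W^\vee\rangle_\otimes\cie\mathcal{K}$ gives $Q^\vee\in\mathcal{K}^\mathrm{perf}$. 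The ``standard homotopy-colimit argument'' deserves a reference (it is the key step in the Neeman--Thomason localization theorem), but it is indeed standard and correct as used.
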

\begin{proof}
It is sufficient to show that any map from a perfect complex to an object of $\mathcal{K}$ factors through an object of $\mathcal{K}^\mathrm{perf}$.

By the last lemma there is an equality
\begin{displaymath}
\mathcal{K}^\mathrm{perf} = \D^\mathrm{perf}_{\supph \mathcal{K}}(X)
\end{displaymath}
of thick subcategories, where the category on the right is, as usual, the full subcategory of perfect complexes supported on $\supph \mathcal{K}$. Suppose we have $P\in \D^\mathrm{perf}(X)$ and $E\in \mathcal{K}$ so, in particular, $E\in \D^b_{\supph E}(X)$. By \cite{Orlov_formal}*{Lemma 2.6} any $P\to E$ factors via an object of $\D^\mathrm{perf}_{\supph E}(X) \cie \mathcal{K}^\mathrm{perf}$ as required which proves the lemma.
\end{proof}

We now define the lattice which will control the thick $\D^\mathrm{perf}(X)$-submodules of $\D^b(\Coh X)$.
\begin{defn}
We use the notation of diagram \eqref{eq_diag}. We set 
\begin{displaymath}
\mathcal{S} = \{(\mathcal{V},\mathcal{W}) \in X \times \Sing Y \; \vert \; \mathcal{V},\mathcal{W} \; \text{are specialization closed, and} \; pi^{-1}(\mathcal{W}) \cie \mathcal{V}\}.
\end{displaymath}
Componentwise inclusions, intersections, and unions give $\mathcal{S}$ a lattice structure.
\end{defn}

\begin{lem}\label{lem_sigma}
There is a function $\sigma \colon \Th^\otimes(\D^b(\Coh X)) \to \mathcal{S}$ given by
\begin{displaymath}
\s\mathcal{K} = (\supph_X \mathcal{K}, \supp_{\D(\QCoh Y)} \mathcal{K}_\mathrm{Sg}),
\end{displaymath}
where we view $\mathcal{K}_\mathrm{Sg}$ as a full subcategory of $\D_\mathrm{Sg}(X)$ using Lemma \ref{lem_sg_ff}.
\end{lem}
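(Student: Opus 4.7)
The plan is to verify three things in order for $\s\mathcal{K}$ to land in $\mathcal{S}$: that $\supph_X \mathcal{K}$ is specialization closed in $X$, that $\supp_{\D(\QCoh Y)} \mathcal{K}_\mathrm{Sg}$ is specialization closed in $\Sing Y$, and that the compatibility $pi^{-1}(\supp_{\D(\QCoh Y)} \mathcal{K}_\mathrm{Sg}) \cie \supph_X \mathcal{K}$ holds.

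For the first two conditions the work is brief. For any object $E \in \D^b(\Coh X)$ the set $\supph E = \bigcup_n \Supp H^n(E)$ is a finite union of closed subsets of $X$, hence specialization closed, and this property is preserved under arbitrary unions. For the singular support one may either invoke the general fact from \cite{StevensonActions} that supports arising from a tensor action are automatically specialization closed, or observe that by Lemma \ref{lem_sg_ff} the image $\mathcal{K}_\mathrm{Sg}$ is a full triangulated subcategory of $\D_\mathrm{Sg}(X)$ whose support is classified by specialization closed subsets of $\Sing Y$ via \cite{Stevensonclass}*{Theorem~8.8}.

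The substance of the lemma lies in the compatibility condition, and this is where I would expect the only real step. Fix $E \in \mathcal{K}$ and $y \in \supp_{\D(\QCoh Y)} \pi E = \supp \pi(i_* p^* E)$. By the stalkwise description of singular support recalled before Lemma \ref{lem_dualsupp}, the stalk $(i_* p^* E)_y \notin \D^\mathrm{perf}(\str_{Y,y})$, and in particular it is nonzero. Since $y \in \Sing Y \cie i(\Sing Z)$ (the containment already used in the proof of Theorem \ref{thm_duality}) and $i$ is a closed immersion, there is a unique $z \in Z$ with $i(z) = y$, and the stalk is identified as $(i_* p^* E)_y \iso (p^* E)_z$ as $\str_{Y,y}$-modules via $\str_{Y,y} \to \str_{Z,z}$. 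Nonvanishing of $(p^* E)_z$ forces $E_{p(z)} \neq 0$, so $p(z) \in \supph E \cie \supph \mathcal{K}$. Taking the union over all $E \in \mathcal{K}$ yields the required inclusion. The main potential difficulty is the stalkwise bookkeeping along $p$ and $i$, but all ingredients have already been deployed in the proof of Theorem \ref{thm_duality}, so the calculation is entirely routine.
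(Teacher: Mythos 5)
Your proposal is correct and follows essentially the same route as the paper: the paper likewise treats specialization-closedness as immediate and reduces everything to the compatibility condition, which it verifies exactly as you do, via the stalkwise description of singular support (nonvanishing of the stalk of $i_*p^*E$ at $y$, identification with the stalk of $p^*E$ over the corresponding point of $Z$, hence $p(z)\in\supph_X E$). Your added care in spelling out the identification of stalks along the closed immersion $i$ is a harmless elaboration of what the paper leaves implicit.
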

\begin{proof}
All we need to check is that
\begin{displaymath}
pi^{-1} \supp_{\D(\QCoh Y)} \mathcal{K}_\mathrm{Sg} \cie \supph_X \mathcal{K}.
\end{displaymath}
This is easily seen from the description of the singular support given before Lemma \ref{lem_dualsupp}: if $y\in \supp_{\D(\QCoh Y)} \pi M$ i.e., in $\supp_{\D(\QCoh Y)} \pi i_*p^* M$ then certainly $y\in \supph_{Z} p^*M$ and so $p(y) \in \supph_X M$.
\end{proof}

\begin{lem}\label{lem_tau}
There is a function $\tau \colon \mathcal{S} \to \Th^\otimes(\D^b(\Coh X))$ given by
\begin{displaymath}
\tau(\mathcal{V},\mathcal{W}) = \{E\in \D^b(\Coh X) \; \vert \; \supph_X E \cie \mathcal{V} \; \text{and} \; \supp_{\D(\QCoh Y)}\pi E \cie \mathcal{W}\}.
\end{displaymath}
\end{lem}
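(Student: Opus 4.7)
There are two things to verify: that $\tau(\mathcal{V},\mathcal{W})$ is a thick subcategory of $\D^b(\Coh X)$, and that it is closed under tensoring with objects of $\D^\mathrm{perf}(X)$. (The compatibility condition $pi^{-1}(\mathcal{W})\cie\mathcal{V}$ built into the definition of $\mathcal{S}$ plays no role in this verification; it only intervenes later, when proving that $\sigma$ and $\tau$ are mutually inverse.) Thickness should follow formally from standard support axioms applied to each of the two defining conditions. For the homological support, closure under summands and the two-out-of-three property on triangles are immediate from the stalkwise definition and the long exact sequence in cohomology. For the singular support, the functor $\pi i_*p^*\colon\D^b(\Coh X)\to\D_\mathrm{Sg}(Y)$ is triangulated, and the support on $\D_\mathrm{Sg}(Y)\iso\K_\mathrm{ac}(\Inj Y)^c$ induced by the $\D(\QCoh Y)$-action enjoys the usual triangle and summand properties from \cite{StevensonActions}. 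Hence each condition cuts out a thick subcategory of $\D^b(\Coh X)$, and so does their intersection.

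For the module property, the inclusion $\supph_X(P\otimes E)\cie \supph_X E\cie\mathcal{V}$ is immediate, so the only content lies in controlling the singular support of $P\otimes E$ for $P\in\D^\mathrm{perf}(X)$. I would argue stalkwise, using the description of $\supp$ recalled before Lemma~\ref{lem_dualsupp} together with the inclusion $\Sing Y\cie i(Z)$ from (essentially) \cite{Stevensonclass}*{Lemma~10.2}: at any $y=i(z)\in\Sing Y$ one has
\[
(i_*p^*(P\otimes E))_y \iso (p^*P)_z \otimes_{\str_{Z,z}} (p^*E)_z,
\]
so the task is to show this stalk is $\str_{Y,y}$-perfect whenever $(i_*p^*E)_y\iso(p^*E)_z$ is. Since $(p^*P)_z$ is a perfect complex over $\str_{Z,z}$, choose a finite free $\str_{Z,z}$-resolution of it; tensoring the resolution over $\str_{Z,z}$ with $(p^*E)_z$ exhibits the displayed stalk as a bounded complex whose terms are finite direct sums of copies of $(p^*E)_z$, hence $\str_{Y,y}$-perfect as soon as $(p^*E)_z$ is. This yields $\supp_{\D(\QCoh Y)}\pi(P\otimes E)\cie\supp_{\D(\QCoh Y)}\pi E\cie\mathcal{W}$, completing the verification.

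The only obstacle worth flagging is precisely this last step: because $i_*p^*$ does not interact multiplicatively with the tensor product on $X$, closure of the singular support under tensoring by a perfect complex is not a formal consequence of any tensor action on $\D_\mathrm{Sg}(X)$, and must be verified by a local computation of the kind above. Everything else is bookkeeping with the standard support-theoretic formalism of \cite{StevensonActions} and \cite{Stevensonclass}.
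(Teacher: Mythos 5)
Your proof is correct, and in spirit it follows the same route as the paper, which simply says the claim "is easily deduced from the properties of the support" (citing \cite{StevensonActions}*{Proposition 5.7}): thickness of each defining condition follows from the triangle/shift/summand properties of $\supph$ and of $\supp_{\D(\QCoh Y)}$ together with exactness of $\pi i_*p^*$, exactly as you say. Where you go beyond the paper is in spelling out the closure under tensoring with perfect complexes via the stalkwise computation $(i_*p^*(P\otimes E))_y \iso (p^*P)_z\otimes^{\mathbf{L}}_{\str_{Z,z}}(p^*E)_z$ and the observation that this complex lies in the thick subcategory of $\D(\str_{Y,y})$ generated by $(p^*E)_z$; that argument is valid (your phrase ``finite direct sums of copies of $(p^*E)_z$'' really means ``a finite iterated extension of shifts of $(p^*E)_z$'', i.e.\ membership in $\thick((p^*E)_z)$, which is what you use), and note you do not even need $\Sing Y\cie i(\Sing Z)$ here, since at points $y\notin i(Z)$ both stalks vanish. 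Your parenthetical claim that the module property ``is not a formal consequence of any tensor action'' is overstated: in the affine case it is automatic because $\D^\mathrm{perf}(\Spec R)=\thick(R)$, and in general it can be extracted formally from the fact that the support is computed stalkwise (\cite{StevensonActions}*{Remark~8.7}) together with the compatibility of the $\D(\QCoh X)$- and $\D(\QCoh Y)$-actions set up in \cite{Stevensonclass}; your local computation is essentially the stalkwise incarnation of that formal argument, so nothing is wrong, only the editorial framing.
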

\begin{proof}
All one needs to check is that $\tau(\mathcal{V},\mathcal{W})$ is a thick $\D^\mathrm{perf}(X)$-submodule and this is easily deduced from the properties of the support (see \cite{StevensonActions}*{Proposition 5.7} for properties of $\supp_{D(\QCoh Y)}$).
\end{proof}

We now prove the main result of this section, namely that $\t$ and $\s$ give a bijection. 

\begin{lem}\label{lem_bij1}
The map $\tau$ is a split monomorphism with left inverse $\s$ i.e., $\s\t = \id_\mathcal{S}$.
\end{lem}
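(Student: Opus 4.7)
I would verify $\sigma\tau(\mathcal{V},\mathcal{W}) = (\mathcal{V},\mathcal{W})$ componentwise; the inclusion $\sigma\tau(\mathcal{V},\mathcal{W}) \cie (\mathcal{V},\mathcal{W})$ is immediate from the definition of $\tau$, so only the reverse inclusions require argument. For the first component, I would note that every perfect complex $P$ with $\supph P \cie \mathcal{V}$ automatically lies in $\tau(\mathcal{V},\mathcal{W})$, since $\pi P = 0$ makes its singular support empty; by Thomason's classification of thick tensor ideals of $\D^\mathrm{perf}(X)$, every $x \in \mathcal{V}$ lies in the support of some such $P$, yielding $\mathcal{V} \cie \supph \tau(\mathcal{V},\mathcal{W})$.

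For the second component, fix $y \in \mathcal{W}$. I would apply \cite{Stevensonclass}*{Theorem~8.8} to pick $N \in \D_\mathrm{Sg}(X)$ with $y \in \supp N \cie \mathcal{W}$ and lift to $M \in \D^b(\Coh X)$; this gives $\supp_{\D(\QCoh Y)} \pi M \cie \mathcal{W}$, but $\supph M$ may well exceed $\mathcal{V}$. To trim it, I would exploit that $\Sing Y \cie i(Z)$ by \cite{Stevensonclass}*{Lemma~10.2} and that $i$ is a closed immersion, so $pi^{-1}(y)$ is a single point $x_y$; since $\mathcal{V}$ is specialization closed and contains $pi^{-1}(\mathcal{W})$, we have $\overline{\{x_y\}} \cie \mathcal{V}$. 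Choosing (again via Thomason) a perfect complex $P$ with $\supph P = \overline{\{x_y\}}$, I would form $M' = M \otimes^{\mathbf{L}} P$. Then $\supph M' \cie \overline{\{x_y\}} \cie \mathcal{V}$, and $\supp_{\D(\QCoh Y)} \pi M' \cie \supp_{\D(\QCoh Y)} \pi M \cie \mathcal{W}$, so $M'$ is a candidate for witnessing $y$ in the second component of $\sigma\tau(\mathcal{V},\mathcal{W})$.

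The main obstacle is to show $y \in \supp_{\D(\QCoh Y)} \pi M'$. Using the identification $i_*p^*(M \otimes P) \cong i_*(p^*M \otimes p^*P)$ and the stalkwise description of the support given before Lemma~\ref{lem_dualsupp}, this reduces to a local question at the unique $z_y \in i^{-1}(y)$: one needs $(p^*M)_{z_y} \otimes (p^*P)_{z_y}$ to remain non-perfect over $\mathcal{O}_{Y,y}$. Since $x_y = p(z_y) \in \supph P$, the factor $(p^*P)_{z_y}$ is a nonzero perfect complex, so the claim reduces to the local statement that tensoring with a nonzero perfect complex over a local complete intersection cannot convert a non-perfect complex into a perfect one --- a consequence of the classification of thick subcategories of the singularity category of the local ring together with the fact that the $\D^\mathrm{perf}$-module structure on $\D_\mathrm{Sg}$ respects supports.
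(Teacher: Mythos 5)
Your route differs from the paper's in the second component, and the difference is worth noting. For the first component you argue essentially as the paper does (perfect complexes supported in $\mathcal{V}$ lie in $\tau(\mathcal{V},\mathcal{W})$ because their image in $\D_\mathrm{Sg}(X)$ vanishes, and Thomason's classification realizes every point of $\mathcal{V}$). For the second component the paper works at the level of subcategories: it identifies the idempotent completion of the image of $\t(\mathcal{V},\mathcal{W})_\mathrm{Sg}$ in $\D_\mathrm{Sg}(X)$ with $\{A \mid \supp_{\D(\QCoh Y)}A\cie\mathcal{W}\}$, invokes \cite{Stevensonclass}*{Theorem~8.8}, and uses that support is unaffected by passing to summands. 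You instead manufacture, for each $y\in\mathcal{W}$, an explicit object $M'=M\otimes^{\mathbf{L}}P$ of $\tau(\mathcal{V},\mathcal{W})$ with $y\in\supp_{\D(\QCoh Y)}\pi M'$; cutting the homological support with a perfect complex supported on $\overline{\{x_y\}}$ is the right idea, and (modulo the routine point that one lifts $N\oplus\Sigma N$ rather than $N$, and that $\supp\pi(M\otimes P)\cie\supp\pi M$ follows by filtering $P$ by its terms) this strategy does prove the lemma, exhibiting concrete witnesses where the paper argues more abstractly.

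The genuine gap is at the step you yourself call the main obstacle. First, the local statement you reduce to is misstated: the tensor product is taken over $B=\mathcal{O}_{Z,z_y}$ while perfection is measured over $A=\mathcal{O}_{Y,y}$, and these are different rings ($i$ is a regular closed immersion of codimension $c-1$; they coincide only in the hypersurface case). What is actually needed is: if $N\in\D^b(\modu B)$ is not perfect as a complex of $A$-modules and $Q$ is a nonzero perfect complex of $B$-modules, then $N\otimes^{\mathbf{L}}_B Q$ is not perfect over $A$. Second, the justification you offer is not a proof: the containment $\supp\pi(P\odot -)\cie\supp\pi(-)$ is among the formal properties of the support, but the reverse containment you need here is precisely the content in question and does not follow from quoting the classification for the local ring. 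The needed statement is true and can be proved directly: the full subcategory $\{Q\in\D^\mathrm{perf}(B)\mid N\otimes^{\mathbf{L}}_B Q \text{ is perfect over } A\}$ is a thick tensor ideal of $\D^\mathrm{perf}(B)$, so if it contained your nonzero $Q$ (whose homological support, being closed and nonempty, contains $\mathfrak{m}_B$), then by Hopkins--Neeman--Thomason it would contain the Koszul complex on a generating set of $\mathfrak{m}_B$; writing that Koszul complex as an iterated cone of multiplications by elements of $\mathfrak{m}_B$, which act trivially on $\Tor^A_*(N,k)$, the resulting long exact sequences force $\Tor^A_i(N,k)=0$ for $i\gg0$, i.e.\ $N$ perfect over $A$, a contradiction. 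With that lemma supplied your argument is complete; without it, the pivotal claim that $y$ survives in $\supp_{\D(\QCoh Y)}\pi M'$ is unsupported.
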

\begin{proof}
Say $(\mathcal{V},\mathcal{W}) \in \mathcal{S}$. We need to check that
\begin{displaymath}
\s\t(\mathcal{V},\mathcal{W}) = (\supph \t(\mathcal{V},\mathcal{W}), \supp_{\D(\QCoh Y)} \t(\mathcal{V},\mathcal{W})_\mathrm{Sg})
\end{displaymath}
is equal to $(\mathcal{V},\mathcal{W})$.

First observe that
\begin{align*}
\supph \t(\mathcal{V},\mathcal{W}) &= \supph (\t(\mathcal{V},\mathcal{W}) \intersec \D^\mathrm{perf}(X)) \\
&= \supph \{E\in \D^\mathrm{perf}(X) \; \vert \; \supph E \cie \mathcal{V} \} \\
&= \mathcal{V}
\end{align*}
where the first equality is Lemma~\ref{lem_perfsupp} and the last equality follows (for example) from the classification of thick ideals of $\D^\mathrm{perf}(X)$ given in \cite{Thomclass}.

Now consider $\supp_{\D(\QCoh Y)} \t(\mathcal{V},\mathcal{W})_\mathrm{Sg}$. It is clear that this subset is contained in $\mathcal{W}$, so it remains to show it is all of $\mathcal{W}$. By Lemma \ref{lem_sg_ff} there is a fully faithful inclusion of $\t(\mathcal{V},\mathcal{W})_\mathrm{Sg}$ into $\D_\mathrm{Sg}(X)$ and we shall denote the image of this inclusion by $\mathcal{M}$. By the definition of $\t$ the idempotent completion of $\mathcal{M}$ in $\D_\mathrm{Sg}(X)$ is 
\begin{displaymath}
\{A \in \D_\mathrm{Sg}(X) \; \vert \; \supp_{\D(\QCoh Y)} A \cie \mathcal{W}\}.
\end{displaymath}
By the classification of \cite{Stevensonclass}*{Theorem 8.8} the singular support of this subcategory is precisely $\mathcal{W}$ and so this must already be true for $\mathcal{M}$. Thus $\supp_{\D(\QCoh Y)} \t(\mathcal{V},\mathcal{W})_\mathrm{Sg} = \mathcal{W}$ completing the proof.
\end{proof}

\begin{lem}\label{lem_bij2}
The map $\s$ is a split monomorphism with left inverse $\t$ i.e., $\t\s = \id_\mathcal{S}$.
\end{lem}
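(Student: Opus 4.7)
The inclusion $\mathcal{K}\cie\tau(\sigma\mathcal{K})$ is immediate from the definitions, so the content is the reverse containment. Fix $E\in\tau(\sigma\mathcal{K})$, and write $\mathcal{V}=\supph\mathcal{K}$ and $\mathcal{W}=\supp_{\D(\QCoh Y)}\mathcal{K}_\mathrm{Sg}$, so that $\supph E\cie\mathcal{V}$ and $\supp_{\D(\QCoh Y)}\pi E\cie\mathcal{W}$. Two preparatory observations set up everything that follows. First, Lemma~\ref{lem_perfsupp} together with Thomason's classification identifies $\mathcal{K}^\mathrm{perf}=\D^\mathrm{perf}_\mathcal{V}(X)$, so $\mathcal{K}$ absorbs every perfect complex supported on $\mathcal{V}$. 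Second, the same Orlov-factorisation argument as in Lemma~\ref{lem_sg_ff} produces a fully faithful functor $\D^b_\mathcal{V}(\Coh X)/\D^\mathrm{perf}_\mathcal{V}(X)\to\D_\mathrm{Sg}(X)$, and both $\pi E$ and $\pi K$ for every $K\in\mathcal{K}$ lie in its essential image.

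By \cite{Stevensonclass}*{Theorem~8.8} combined with Lemma~\ref{lem_bij1}, the thick subcategory of $\D_\mathrm{Sg}(X)$ with singular support $\mathcal{W}$ is precisely the idempotent completion, inside $\D_\mathrm{Sg}(X)$, of the fully faithful image of $\mathcal{K}_\mathrm{Sg}$. The support bound on $\pi E$ therefore realises it as a direct summand of some $\pi K$ with $K\in\mathcal{K}$. I would represent the split monomorphism $\pi E\rmon\pi K$ in the subquotient $\D^b_\mathcal{V}/\D^\mathrm{perf}_\mathcal{V}$ by a roof $E\xleftarrow{s}M\xrightarrow{a}K$ with $M\in\D^b_\mathcal{V}(\Coh X)$ and $\cone(s)\in\D^\mathrm{perf}_\mathcal{V}(X)=\mathcal{K}^\mathrm{perf}$. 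Since $\pi s$ is an isomorphism, $\pi a$ is itself a split monomorphism, and so in the triangle $M\xrightarrow{a}K\to\cone(a)$ the object $\cone(a)\in\D^b_\mathcal{V}(\Coh X)$ satisfies $\pi K\iso\pi E\oplus\pi\cone(a)$ in the subquotient; equivalently, $\pi(E\oplus\cone(a))\iso\pi K$ there.

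Lifting this latter isomorphism to a second roof $E\oplus\cone(a)\xleftarrow{t}N\xrightarrow{b}K$ inside $\D^b_\mathcal{V}(\Coh X)$, both $\cone(t)$ and $\cone(b)$ lie in $\D^\mathrm{perf}_\mathcal{V}(X)=\mathcal{K}^\mathrm{perf}$. The triangle through $b$, together with $K,\cone(b)\in\mathcal{K}$, forces $N\in\mathcal{K}$; the triangle through $t$, together with $N,\cone(t)\in\mathcal{K}$, forces $E\oplus\cone(a)\in\mathcal{K}$; and closure of $\mathcal{K}$ under direct summands finally delivers $E\in\mathcal{K}$. The main obstacle is the second stage: transferring the summand decomposition from the idempotent-completed category $\D_\mathrm{Sg}(X)$ down to a roof inside $\D^b_\mathcal{V}(\Coh X)$ in which the complementary summand is realised by an honest cone. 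The relative version of Lemma~\ref{lem_sg_ff} is the key technical input, and it is only because of the perfect absorption from the preparatory step that the perfect debris produced by the two lifted roofs can be safely placed inside $\mathcal{K}$.
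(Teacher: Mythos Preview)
Your argument is correct and follows the same underlying strategy as the paper: identify $\mathcal{K}^\mathrm{perf}=\D^\mathrm{perf}_{\mathcal{V}}(X)$ via Lemma~\ref{lem_perfsupp} and Thomason, then use the classification in $\D_\mathrm{Sg}(X)$ to see that any $E\in\tau\sigma\mathcal{K}$ has image which is a summand of some $\pi K$ with $K\in\mathcal{K}$, and finally lift this back to $\D^b(\Coh X)$ using that the perfect ``debris'' lands in $\mathcal{K}$.

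The difference is purely in packaging. You chase a single object $E$ through two explicit roofs in the auxiliary quotient $\D^b_{\mathcal{V}}(\Coh X)/\D^\mathrm{perf}_{\mathcal{V}}(X)$; the paper instead stays at the level of subcategories. It observes that $(\tau\sigma\mathcal{K})^\mathrm{perf}=\mathcal{K}^\mathrm{perf}$, so both $\mathcal{K}_\mathrm{Sg}$ and $(\tau\sigma\mathcal{K})_\mathrm{Sg}$ embed fully faithfully in $\D_\mathrm{Sg}(X)$ with the same idempotent completion; since $\mathcal{K}_\mathrm{Sg}$ is already idempotent complete inside $(\tau\sigma\mathcal{K})_\mathrm{Sg}$ (this is the general fact that a thick subcategory modulo a common thick kernel remains thick, which is exactly your roof argument in disguise), the two coincide. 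The conclusion then drops out from the Verdier tower
\[
\tau\sigma\mathcal{K}/\mathcal{K}\;\iso\;(\tau\sigma\mathcal{K})_\mathrm{Sg}/\mathcal{K}_\mathrm{Sg}=0.
\]
What the paper's formulation buys is brevity and a clean use of the third isomorphism theorem for Verdier quotients; what your version buys is that the step the paper leaves implicit (why $\mathcal{K}_\mathrm{Sg}$ is closed under summands in $(\tau\sigma\mathcal{K})_\mathrm{Sg}$) is actually written out. Your invocation of Lemma~\ref{lem_bij1} is unnecessary, by the way: all you need there is the classification \cite{Stevensonclass}*{Theorem~8.8} together with $\supp_{\D(\QCoh Y)}\mathcal{K}_\mathrm{Sg}=\mathcal{W}$.
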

\begin{proof}
Suppose $\mathcal{K} \in \Th^\otimes(\D^b(\Coh X))$ and consider
\begin{align*}
\t\s\mathcal{K} &= \tau(\supph \mathcal{K}, \supp_{\D(\QCoh Y)} \mathcal{K}_\mathrm{Sg}) \\
&= \{ E\in \D^b(\Coh X) \; \vert \; \supph E \cie \supph \mathcal{K},\; \supp_{\D(\QCoh Y)} \pi E \cie \supp_{\D(\QCoh Y)} \mathcal{K}_\mathrm{Sg} \}.
\end{align*}
We have $\mathcal{K} \cie \t\s\mathcal{K}$ and we need to check the reverse inclusion.

First observe that
\begin{displaymath}
(\t\s\mathcal{K})^\mathrm{perf} = \{E\in \D^\mathrm{perf}(X) \; \vert \; \supph E \cie \supph \mathcal{K}^\mathrm{perf}\} = \mathcal{K}^\mathrm{perf}
\end{displaymath}
by Lemma \ref{lem_perfsupp} and the classification result for $\D^\mathrm{perf}(X)$. Thus we have fully faithful functors
\begin{displaymath}
\mathcal{K}_\mathrm{Sg} \to (\t\s\mathcal{K})_\mathrm{Sg} \to \D_\mathrm{Sg}(X).
\end{displaymath}
The classification of thick subcategories for $\D_\mathrm{Sg}(X)$ implies that the idempotent completions of $\mathcal{K}_\mathrm{Sg}$ and $(\t\s\mathcal{K})_\mathrm{Sg}$ in $\D_\mathrm{Sg}(X)$ agree as they correspond to the same subset of $\Sing Y$. As $\mathcal{K}_\mathrm{Sg}$ is already idempotent complete in $(\t\s\mathcal{K})_\mathrm{Sg}$ these two categories must agree. Thus
\begin{align*}
0 = (\t\s\mathcal{K})_\mathrm{Sg} / \mathcal{K}_\mathrm{Sg} &= (\t\s\mathcal{K} / \mathcal{K}^\mathrm{perf}) / (\mathcal{K} / \mathcal{K}^\mathrm{perf}) \\
&\iso \t\s\mathcal{K} / \mathcal{K}
\end{align*}
showing that $\t\s\mathcal{K} = \mathcal{K}$ as desired.
\end{proof}

Combining the last two lemmas proves the following classification theorem.

\begin{thm}\label{thm_bij}
The assignments $\s$ and $\t$ give an inclusion preserving bijection between $\Th^\otimes(\D^b(\Coh X))$ and 
\begin{displaymath}
\mathcal{S} = \{(\mathcal{V},\mathcal{W}) \in X \times \Sing Y \; \vert \; \mathcal{V},\mathcal{W} \; \text{are specialization closed, and} \; pi^{-1}(\mathcal{W}) \cie \mathcal{V}\}.
\end{displaymath}
In particular, if $X\iso \Spec R$ is affine then this gives a classification of thick subcategories of $\D^b(\modu R)$.
\end{thm}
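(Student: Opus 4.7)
The theorem is a formal consequence of the two preceding lemmas, so the plan is essentially to assemble them. First I would observe that Lemma \ref{lem_bij1} gives $\s\t = \id_\mathcal{S}$ and Lemma \ref{lem_bij2} gives $\t\s = \id$ on $\Th^\otimes(\D^b(\Coh X))$; taken together these two identities say precisely that $\s$ and $\t$ are mutually inverse bijections between $\Th^\otimes(\D^b(\Coh X))$ and $\mathcal{S}$.

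Next I would check that both maps preserve inclusions. This is immediate from the definitions: the map $\s$ takes a thick $\D^\mathrm{perf}(X)$-submodule to a pair built as a union of supports of its objects (together with the singular supports of its image in $\D_\mathrm{Sg}(X)$, which is also monotone by Lemma \ref{lem_sg_ff}), and is therefore monotone in $\mathcal{K}$. On the other side, $\t(\mathcal{V},\mathcal{W})$ is defined as the full subcategory of objects whose two supports are contained in $\mathcal{V}$ and $\mathcal{W}$ respectively, which is manifestly monotone in $(\mathcal{V},\mathcal{W})$ under componentwise inclusion.

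For the final clause, when $X \iso \Spec R$ is affine there is the standard equivalence $\D^b(\Coh X) \iso \D^b(\modu R)$, which is compatible with the tensor action of $\D^\mathrm{perf}(X) \iso \D^\mathrm{perf}(R)$. The bijection therefore descends immediately to a classification of $\D^\mathrm{perf}(R)$-submodule thick subcategories of $\D^b(\modu R)$ by elements of $\mathcal{S}$.

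The main obstacle has already been dispatched in the preparatory material: Lemma \ref{lem_bij1} leans on the Thomason classification of thick ideals in $\D^\mathrm{perf}(X)$ together with the classification \cite{Stevensonclass}*{Theorem~8.8} of thick subcategories of $\D_\mathrm{Sg}(X)$, while Lemma \ref{lem_bij2} uses Lemmas \ref{lem_perfsupp} and \ref{lem_sg_ff} to match up the perfect and singular parts of $\mathcal{K}$ with those of $\t\s\mathcal{K}$. I do not anticipate any further difficulty at this stage; the theorem follows by simply recording the bijection.
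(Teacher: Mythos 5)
Your proposal is correct and matches the paper's argument, which is literally just the observation that Lemmas \ref{lem_bij1} and \ref{lem_bij2} combine to give the bijection; your extra remarks on monotonicity and the affine case are routine and consistent with the intended reading. (Note you correctly interpret Lemma \ref{lem_bij2} as asserting $\t\s$ is the identity on $\Th^\otimes(\D^b(\Coh X))$, despite the misprint ``$\id_\mathcal{S}$'' in its statement.)
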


\begin{rem}
It is appropriate at this juncture to make some remarks concerning special cases of the bijection and similar results in the literature. As we have already mentioned Iyengar has a proof of this classification (\cite{IyengarLCI}) for locally complete intersections essentially of finite type over a field; this particular result will be discussed further below.

In \cite{TakahashiLPS}*{Theorem 3.13} Takahashi gives a partial proof of the theorem for abstract hypersurface rings. Namely, he considers the case of thick subcategories containing the perfect complexes i.e., elements of the form $(\Spec R, \mathcal{W}) \in \mathcal{S}$. This reduces to the classification for thick subcategories of the singularity category (at the other extreme, where one considers pairs $(\mathcal{V},\varnothing)$ one reduces to the classification of thick ideals of perfect complexes).

The full classification for the bounded derived categories of elementary abelian $p$-groups goes back to the work of Benson, Carlson, and Rickard \cite{BCR}; the extension from the singularity category to the bounded derived category is simplified in this case as the rings involved are artinian.
\end{rem}

We have the following corollary of the classification theorem extending our self-duality result for the singularity category.

\begin{cor}\label{cor_duality}
Every thick $\D^\mathrm{perf}(X)$-submodule of $\D^b(\Coh X)$ is dual to itself under Grothendieck duality i.e., if $\mathcal{K} \in \Th^\otimes(\D^b(\Coh X))$ then $\RsHom(-,\str_X)$ gives an equivalence $\mathcal{K} \stackrel{\sim}{\to} \mathcal{K}^\op$.
\end{cor}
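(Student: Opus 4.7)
The plan is to reduce the statement to the classification theorem just proved (Theorem~\ref{thm_bij}) together with the singular-support calculation from Theorem~\ref{thm_duality}. Given $\mathcal{K} \in \Th^\otimes(\D^b(\Coh X))$, by Theorem~\ref{thm_bij} we have $\mathcal{K} = \t(\mathcal{V},\mathcal{W})$, where $\mathcal{V} = \supph \mathcal{K}$ and $\mathcal{W} = \supp_{\D(\QCoh Y)} \pi \mathcal{K}_\mathrm{Sg}$. So to show $M^\vee \in \mathcal{K}$ whenever $M \in \mathcal{K}$ it is enough to verify the two containments
\begin{displaymath}
\supph M^\vee \cie \mathcal{V} \qquad \text{and} \qquad \supp_{\D(\QCoh Y)} \pi M^\vee \cie \mathcal{W}.
\end{displaymath}

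The second is immediate from Theorem~\ref{thm_duality}, which gives the equality $\supp_{\D(\QCoh Y)} \pi M = \supp_{\D(\QCoh Y)} \pi M^\vee$. For the first, note that $X$, being the zero scheme of a regular section of a vector bundle on the regular scheme $T$, is a locally complete intersection and therefore Gorenstein; hence $\str_X$ is a dualizing complex and $(-)^\vee$ is a contravariant self-equivalence of $\D^b(\Coh X)$. Since duality commutes with localization, the stalk $(M^\vee)_x$ at a point $x \in X$ is $\RHom_{\str_{X,x}}(M_x, \str_{X,x})$, which vanishes if and only if $M_x$ does, using that $\str_{X,x}$ is a dualizing complex over the local Gorenstein ring $\str_{X,x}$. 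Consequently $\supph M^\vee = \supph M \cie \mathcal{V}$.

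Combining the two containments, $M^\vee$ lies in $\t(\mathcal{V},\mathcal{W}) = \mathcal{K}$. Dually $(-)^\vee$ sends $\mathcal{K}$ into $\mathcal{K}$, and since the canonical morphism $M \to (M^\vee)^\vee$ is an isomorphism this restricted functor is a contravariant self-equivalence, yielding the desired equivalence $\mathcal{K} \stackrel{\sim}{\to} \mathcal{K}^\op$. The only mild subtlety is confirming that $X$ really is Gorenstein in the generality we work in, so that $\str_X$ serves as dualizing complex and the homological support is preserved under duality; once this is in hand the argument is purely formal and the heavy lifting has already been done in Theorems~\ref{thm_duality} and~\ref{thm_bij}.
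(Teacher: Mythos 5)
Your proof is correct and takes essentially the same route as the paper: apply the classification of Theorem~\ref{thm_bij} and check that both pieces of support data are unchanged by $(-)^\vee$, with Theorem~\ref{thm_duality} handling the singular support. The only cosmetic difference is that you verify $\supph M^\vee = \supph M$ stalkwise using that $X$ is Gorenstein, whereas the paper deduces the invariance of the homological data from rigidity of $\D^\mathrm{perf}(X)$ together with Lemma~\ref{lem_perfsupp}; both are valid.
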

\begin{proof}
By the theorem a thick submodule $\mathcal{K}$ of $\D^b(\Coh X)$ is determined by $\supph \mathcal{K}^\mathrm{perf}$ (this also uses Lemma \ref{lem_perfsupp}) and $\supp_{\D(\QCoh Y)}\mathcal{K}_\mathrm{Sg}$. Taking duals in either $\D^\mathrm{perf}(X)$ or $\D_\mathrm{Sg}(X)$ does not change the relevant supports, by rigidity of the former category and Theorem \ref{thm_duality} for the latter. Thus, as the quotient sequence giving rise to $\D_\mathrm{Sg}(X)$ is compatible with the duality, the image of $\mathcal{K}$ under $\RsHom(-,\str_X)$ is determined by the same support theoretic data as $\mathcal{K}$ and so is just the thick submodule $\mathcal{K}^\op$ of $\D^b(R)^\op$.
\end{proof}

This leads us to the rather natural question, also posed in \cite{DTradius}*{Question~5.3}. Is the converse of Corollary \ref{cor_duality} true? Explicitly, if $X$ is a noetherian separated Gorenstein scheme such that every thick submodule of $\D^b(\Coh X)$ is self-dual under $\RsHom(-,\str_X)$ is $X$ a complete intersection (possibly in a more general sense than we have used i.e., locally a complete intersection)? I am aware of little evidence that this should be the case, so, restricting to the affine case, it is perhaps better to ask what class of rings this condition classifies? 

Of course it would also be interesting to understand how the thick subcategories of $\D^b(\modu R)$ for a general Gorenstein $R$ (or more generally scheme $X$) are permuted by the duality. However, this seems at the current time to be out of reach; no general classification of thick subcategories has been obtained nor does there even seem to be any reasonable candidates for a support datum.

As in the case of the singularity category the duality result we have proved has consequences for vanishing of cohomology. We restrict ourselves to the case $X = \Spec R$ is affine. As the proof is basically identical to the proof of Theorem \ref{thm_symm1} we do not include it.

\begin{cor}\label{cor_dbduality}
Let $M$ and $N$ be objects of $\D^b(\modu R)$. Then 
\begin{displaymath}
N \in \langle M \rangle^\perp \;\; \text{if and only if} \;\; M \in \langle N \rangle^\perp.
\end{displaymath}
\end{cor}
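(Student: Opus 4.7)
My plan is to mimic the proof of Theorem~\ref{thm_symm1} line by line, with Corollary~\ref{cor_duality} taking the place of Theorem~\ref{thm_duality}. The one preliminary point to settle is that, in the affine setting, every thick subcategory $\mathcal{K}\cie\D^b(\modu R)$ is automatically a $\D^\mathrm{perf}(R)$-submodule: for any $L\in\mathcal{K}$ the subcategory $\{P\in\D^\mathrm{perf}(R) \; \vert \; P\otimes L\in\mathcal{K}\}$ is thick in $\D^\mathrm{perf}(R)$ and contains $R$, hence coincides with all of $\D^\mathrm{perf}(R)=\langle R\rangle$. Consequently $\langle M\rangle=\langle M\rangle_\otimes$ and $\langle N\rangle=\langle N\rangle_\otimes$, so Corollary~\ref{cor_duality} tells us that both thick subcategories are closed under Grothendieck duality.

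With this in hand the argument proceeds as in Theorem~\ref{thm_symm1}. By thickness of the right orthogonal, $N\in\langle M\rangle^\perp$ if and only if $\langle N\rangle\cie\langle M\rangle^\perp$, i.e.\ $\Hom_{\D^b(\modu R)}(M',N')=0$ for all $M'\in\langle M\rangle$ and $N'\in\langle N\rangle$. Grothendieck duality supplies the natural isomorphism
\[
\Hom_{\D^b(\modu R)}(M',N')\iso\Hom_{\D^b(\modu R)}(\RHom(N',R),\RHom(M',R)),
\]
and by the self-duality of $\langle M\rangle$ and $\langle N\rangle$ the assignments $M'\mapsto\RHom(M',R)$ and $N'\mapsto\RHom(N',R)$ are involutive bijections of these subcategories onto themselves. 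Hence the vanishing of the left-hand side for every such pair is equivalent to $\Hom(L,K)=0$ for every $K\in\langle M\rangle$ and $L\in\langle N\rangle$, which is exactly the statement $M\in\langle N\rangle^\perp$.

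Beyond the formal application of Grothendieck duality, the only substantive input is the self-duality supplied by Corollary~\ref{cor_duality}, which plays the role here that Theorem~\ref{thm_duality} plays in the proof of Theorem~\ref{thm_symm1}. The main obstacle, such as it is, is the preliminary identification $\langle M\rangle=\langle M\rangle_\otimes$; without it, Corollary~\ref{cor_duality} would only yield self-duality of the larger submodules, and the substitution step above would fail. Once this observation is made the rest of the argument is essentially a direct transcription of the earlier one.
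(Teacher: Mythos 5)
Your argument is exactly the one the paper intends: it omits the proof precisely because it is the proof of Theorem~\ref{thm_symm1} transcribed with Corollary~\ref{cor_duality} in place of Theorem~\ref{thm_duality}, which is what you have written. Your preliminary observation that in the affine case every thick subcategory of $\D^b(\modu R)$ is automatically a $\D^\mathrm{perf}(R)$-submodule (so $\langle M\rangle=\langle M\rangle_\otimes$) is correct and is the same fact the paper uses implicitly in the last sentence of Theorem~\ref{thm_bij}, so the proposal is complete and matches the paper's approach.
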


This extends part of a theorem of Avramov and Buchweitz \cite{AvBuchSupp}*{Theorem III} to the bounded derived category.

We end with an amusing consequence of the classification: one can, at least in theory, use the classification to compute the spectrum of some Hochschild cohomology rings via Iyengar's result \cite{IyengarLCI}.

\begin{prop}\label{prop_HH}
Let $R$ be a complete intersection that is essentially of finite type over a field $K$. Denote by $cl(\mathcal{S})$ the sublattice of $\mathcal{S}$ consisting of pairs $(\mathcal{V},\mathcal{W})$ where $\mathcal{V}$ and $\mathcal{W}$ are closed in $\Spec R$ and $\Sing Y$ respectively. Then $\Spec cl(\mathcal{S})$, the spectral space of prime filters on $cl(\mathcal{S})$, is naturally homeomorphic to $\Spech HH^*(R/K)$ the homogeneous spectrum of the Hochschild cohomology of $R$ over $K$.
\end{prop}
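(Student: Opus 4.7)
The plan is to combine Iyengar's classification with Theorem \ref{thm_bij} to identify $cl(\mathcal{S})$ with the lattice of closed subsets of $\Spech HH^*(R/K)$, and then invoke the standard reconstruction of a noetherian sober space from its lattice of closed subsets. In the affine setting every thick subcategory of $\D^b(\modu R)$ is automatically a $\D^\mathrm{perf}(R)$-submodule, since tensoring with a perfect complex is an iterated cone-and-shift operation and therefore preserves any thick subcategory. So Theorem \ref{thm_bij} specializes to a lattice isomorphism $\mathcal{S} \iso \Th(\D^b(\modu R))$. Iyengar's result \cite{IyengarLCI} simultaneously furnishes a lattice isomorphism between $\Th(\D^b(\modu R))$ and the specialization closed subsets of $\Spech HH^*(R/K)$, and composing gives
\[ \Phi\colon \mathcal{S} \stackrel{\sim}{\longrightarrow} \{\text{specialization closed subsets of } \Spech HH^*(R/K)\}. \]
Both classifications respect arbitrary joins, because in all three lattices joins are realized as the thick subcategory generated by a union, which on the support side reduces to ordinary union of sets.

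Next I would show that $\Phi$ restricts to an isomorphism $cl(\mathcal{S}) \iso \{\text{closed subsets of } \Spech HH^*(R/K)\}$. Since $HH^*(R/K)$ is a finitely generated graded $K$-algebra in the complete intersection case, $\Spech HH^*(R/K)$ is a noetherian space, and hence its closed subsets are precisely the compact elements of the lattice of specialization closed subsets. On the other side, joins in $\mathcal{S}$ are componentwise unions (the containment condition $pi^{-1}(\mathcal{W}) \cie \mathcal{V}$ is preserved by such unions), so $(\mathcal{V},\mathcal{W})$ is compact in $\mathcal{S}$ if and only if each component is compact in its own lattice, which happens if and only if $(\mathcal{V},\mathcal{W}) \in cl(\mathcal{S})$. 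Any join-preserving lattice isomorphism preserves compact elements, so $\Phi$ restricts as claimed.

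To conclude, one invokes the general fact that a noetherian sober space $T$ is canonically homeomorphic to the spectral space of prime filters on its lattice of closed subsets: the map sending $x \in T$ to the filter of closed subsets containing $x$ is a bijection onto the prime filters (a prime filter on the lattice of closed subsets of a noetherian space is principal, generated by an irreducible closed subset, which sobriety identifies with a unique point), and a routine check shows this bijection is a homeomorphism for the standard topology on the prime spectrum. Applying this to $T = \Spech HH^*(R/K)$ and transporting along the restricted isomorphism from the previous paragraph yields the desired homeomorphism $\Spec cl(\mathcal{S}) \iso \Spech HH^*(R/K)$. The main obstacle I anticipate is the middle step, verifying that $\Phi$ carries $cl(\mathcal{S})$ exactly onto the closed subsets; this reduces to the lattice-theoretic fact that compact elements are preserved under join-preserving isomorphisms, but to apply it one must first confirm that $\Phi$ genuinely commutes with arbitrary (not just finite) joins, which in turn follows from the description of support in each classification as a set-theoretic union over the objects of the thick subcategory.
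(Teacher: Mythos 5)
Your proposal is correct and takes essentially the same route as the paper: compose Iyengar's lattice isomorphism with the one from Theorem \ref{thm_bij}, identify $cl(\mathcal{S})$ with the lattice of closed subsets of $\Spech HH^*(R/K)$, and then recover the space by Stone/Hochster-type duality for distributive lattices. The only cosmetic difference is that you detect $cl(\mathcal{S})$ as the compact elements of the lattices involved, where the paper phrases the same observation via singly generated thick subcategories (which are exactly the compact elements); note also that for $R$ essentially of finite type over $K$ the claim that $HH^*(R/K)$ is a finitely generated $K$-algebra should be weakened to noetherianity, which is all your argument needs.
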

\begin{proof}
There are lattice isomorphisms
\begin{displaymath}
\left\{ \begin{array}{c}
\text{specialization closed subsets} \\ \text{of}\; \Spech HH^*(R/K) 
\end{array} \right\}
\xymatrix{ \ar[r]<1ex>^{t} \ar@{<-}[r]<-1ex>_{s} &} \left\{
\begin{array}{c}
\text{thick subcategories of} \; \D^b(R) 
\end{array} \right\}
\xymatrix{ \ar[r]<1ex>^{\s} \ar@{<-}[r]<-1ex>_{\tau} &} \mathcal{S}
\end{displaymath}
where the first isomorphism is due to Iyengar \cite{IyengarLCI}, $s$ is as in \cite{BIKstrat2}*{Theorem 6.1}, and $t$ is the obvious inverse. One checks easily that closed subsets of $\Spech HH^*(R/K)$ correspond to singly generated thick subcategories of $\D^b(R)$ which in turn correspond to elements of $cl(\mathcal{S})$. Now one just needs to note that, by the theory of duality for distributive lattices, $\Spech HH^*(R/K)$ is determined by its lattice of closed subsets; by the above bijections and observation this lattice is isomorphic to $cl(\mathcal{S})$.
\end{proof}

\begin{ack}
I am grateful to Hailong Dao and Ryo Takahashi for asking me the question, concerning closure of thick subcategories of maximal Cohen-Macaulay modules under duals, which motivated this work. It is also a pleasure to thank Srikanth Iyengar for interesting discussions on this subject and Hailong Dao for helpful comments.
\end{ack}

  \bibliography{greg_bib}

\end{document}